\newtheorem{theorem}{Theorem}
\newtheorem{lemma}{Lemma}
\newtheorem{assumption}{Assumption}
\newtheorem{remark}{Remark}
\begin{document}
\pagestyle{empty}  % no page number for the second and the later pages
\thispagestyle{empty} % no page number for the first page
\onecolumn
{\copyright{2020 IEEE.  Personal use of this material is permitted.  Permission from IEEE must be obtained for all other uses, in any current or future media, including reprinting/republishing this material for advertising or promotional purposes, creating new collective works, for resale or redistribution to servers or lists, or reuse of any copyrighted component of this work in other works.}}
\newpage
\twocolumn
%\title{Augmented Lagrange Algorithms for Resource Allocation over Multi-agent Networks via Edge-based Method}
\title{S-DIGing: A Stochastic Gradient Tracking Algorithm for Distributed Optimization}
\author{ Huaqing Li \IEEEmembership{Senior Member}, Lifeng Zheng, Zheng Wang, Yu Yan, Liping Feng, and Jing Guo
\thanks{ The work described in this paper is supported in part by the National Natural Science Foundation of China under Grant 61773321, in part by the Innovation Support Program for Chongqing Overseas Returnees under Grant cx2017043, and in part by the Special Financial Support from Chongqing Postdoctoral Science Foundation under Grant Xm2017100. \emph{(Corresponding author: Yu Yan.)}

 H. Li, L. Zheng, Z. Wang, Y. Yan, and J. Guo are with Chongqing Key Laboratory of Nonlinear Circuits and Intelligent Information Processing, College of Electronic and Information Engineering, Southwest University, Chongqing 400715, PR China (e-mail: huaqingli@swu.edu.cn; zlf\_swu@163.com; swu\_wz@126.com; yanyu\_nice@163.com; poem24@163.com).

 L. Feng is with Department of Computer Science, Xinzhou Teachers University, Shanxi Xinzhou 034000, PR China (e-mail: fenglp@yeah.net).
 }}
\maketitle
\setcounter{page}{1}
\begin{abstract}
In this paper, we study convex optimization problems where agents of a network cooperatively minimize the global objective function which consists of multiple local objective functions.
The intention of this work is to solve large-scale optimization problems where the local objective function is complicated and numerous.
Different from most of the existing works, the local objective function of each agent is presented as the average of finite instantaneous functions.
Integrating the gradient tracking algorithm with stochastic averaging gradient technology, a distributed stochastic gradient tracking (termed as S-DIGing) algorithm is proposed.
At each time instant, only one randomly selected gradient of an instantaneous function is computed and applied to approximate the local batch gradient for each agent.
Based on a novel primal-dual interpretation of the S-DIGing algorithm, it is shown that the S-DIGing algorithm linearly converges to the global optimal solution when step-size do not exceed an explicit upper bound and the instantaneous functions are strongly convex with Lipschitz continuous gradients.
Numerical experiments are presented to demonstrate the practicability of the S-DIGing algorithm and correctness of the theoretical results.
\end{abstract}
\begin{IEEEkeywords}
Distributed optimization, gradient tracking, stochastic averaging gradient, multi-agent systems, linear convergence.
\end{IEEEkeywords}

\IEEEpeerreviewmaketitle

\section{Introduction}
With the emergence of applications in the fields of wireless sensor network, smart-grid, machine learning and cloud computing, distributed optimization theory and application have received extensive attention, and gradually penetrated into many aspects of scientific research, engineering applications and social life\cite{Sardellitti2014,Jia2018,4447651,6851883,Fan2012,Tsianos2012,qin7839186,yuan7298442,5233809,7429776,8825499}.
%A great number of scholars have poured into the research field of distributed optimization.
Unlike the traditional centralized optimization problem, the concept of distributed optimization problem is that multiple agents in a network work together to minimize the global objective function $\tilde{f}\left( {\tilde{x}} \right)=\sum\nolimits_{i=1}^{m}{{{f}_{i}}\left( {\tilde{x}} \right)}$ in which $f_{i}$ is only known by agent $i$.
Each agent computes the local information of itself and sends the results to its neighbor agents.

In the existing literature, researches on distributed optimization algorithms are mainly based on Newton's method, (sub)gradient descent method and Lagrangian method.
Comparing with the other two methods, the (sub)gradient descent method is comparably simple, where each agent calculates the (sub)gradient of the local objective function and moves the estimation along the negative direction of the (sub)gradient\cite{Nesterov1966}.
Based on (sub)gradient method, Liu et al. \cite{Liu2017} prove that the estimations can converge to a global optimal solution with convergence rate $O(1/k)$ provided that the diminishing step-size satisfies some conditions and the global objective function is strongly convex.
In order to further improve the convergence rate, Nedic et al. \cite{Nedic2017c} combine the distributed inexact gradient method with the gradient tracking technique to introduce the DIGing algorithm.
Employing doubly stochastic mixing matrices and a fixed step-size, the DIGing algorithm can converge at a linear rate as long as the fixed step-size do not exceed some upper bound.
Based on the gradient tracking method proposed in \cite{Nedic2017c}, Maros and Jalden innovatively develop a dual linearly converging method (PANDA).
The advantages of PANDA is that it requires communicating half as many quantities as DIGing per iteration and PANDA's iterates are in general computationally more expensive than those of DIGing \cite{PANDA69}.
More classical results about (sub)gradient method can be found in \cite{Nedic2008,Nedic2009a ,Lobel2011a}.
Unlike the (sub)gradient descent method, algorithms based on the Newton's method usually have faster convergence rates but more expensive computation costs.
This kind of algorithms use the local first-order and second-order partial derivative information to estimate the trait of the global objective function, and obtain the global optimal solution \cite{Mokhtari2017,Wei2013}.
To reduce the high computation cost, the quasi-Newton method with less computational cost is proposed.
The essential idea of the quasi-Newton method is to avoid the defect solving the inverse of the complex Hessian matrix at each time instant.
It employs a positive definite matrix to approximate the inverse of the Hessian matrix, which simplifies the computational complexity\cite{Bolognani2010,Lewis2013}.
The Lagrange multiplier method is mainly used to solve the constrained optimization problem \cite{8708976}.
The basic idea is to transform constrained optimization problems with $m$ variables and $d$ constraints into unconstrained optimization problems with $m+d$ variables by introducing Lagrange multipliers.
A typical example is the decentralized alternating direction method of multipliers (ADMM)\cite{Boyd2010a}, based on which many distributed algorithms are presented\cite{Iutzeler2016,Chang2016}.
Distributed ADMM algorithms show linear convergence rates for strongly convex functions with a fixed step-size, but suffers from heavy computation burden because each agent has to optimize its local objective function at each time instant.
To reduce computation cost, the exact first-order algorithm \cite{Shi2014} is proposed, which is essentially first-order approximations of distributed ADMM.

In distributed settings, all of the aforementioned algorithms require the computationally costly evaluation of the local gradient $\nabla {{f}_{i}}\left( {\tilde{x}} \right)$ when the local objective function ${{f}_{i}}\left( {\tilde{x}} \right)$ is complicated and numerous, such as problems about machine learning, data mining and so on.
This cost can be avoided by stochastic decentralized algorithms that reduce computational cost of iterations by substituting all local gradients with their stochastic approximations\cite{Schmidt2017}.
The DSA algorithm proposed in \cite{Mokhtari2016} combines the EXTRA algorithm and the stochastic gradient technique \cite{Defazio2014} to save computation cost without compromising convergence.
Under strongly convex and Lipschitz continuous gradient conditions, the DSA algorithm can also achieve a linear convergence rate with a fixed step-size.
Inspired by the DSA algorithm, an augmented Lagrange stochastic gradient algorithm is presented to address the distributed optimization problem, which combines the factorization of weighted Laplacian and local unbiased stochastic averaging gradient methods\cite{Introduction2018}.
Based on $\mathsf{\mathcal{A}\mathcal{B}}$ algorithm \cite{Xin2018}, Xin et al. propose a distributed stochastic gradient algorithm, called $\mathsf{\mathcal{S}}\text{-}\mathsf{\mathcal{A}\mathcal{B}}$, where each agent uses an auxiliary variable to asymptotically track the gradient of the global objective function in expectation\cite{Xin2019a}.
Employing row- and column-stochastic weights simultaneously, the $\mathsf{\mathcal{S}}\text{-}\mathsf{\mathcal{A}\mathcal{B}}$ algorithm converges to a neighborhood of the global optimal solution with a linear convergence rate.
Using Hessian information, a linear algorithm, called SUCAG, is introduced in \cite{Wai2019}.
When the initialization point is sufficiently close to the optimal solution, the established convergence rate of the SUCAG algorithm is only dependent on the condition number of the global objective problem, making it strictly faster than the known rate for the SAGA method \cite{Defazio2014}.

In this work, we introduce a novel distributed optimization algorithm by integrating the gradient tacking and stochastic gradient technologies into gradient descent method\cite{Nedic2017c,Defazio2014}.
The S-DIGing algorithm is based on the combination of the DIGing algorithm \cite{Nedic2017c} and the unbiased stochastic gradients introduced in \cite{Defazio2014}.
We propose a new analytical framework, which is completely different from \cite{Nedic2017c}.
Specifically, we iteratively rewrite the S-DIGing algorithm into a general form and let the accumulation estimate be a dual variable to obtain a primal-dual algorithm which is equivalent to the S-DIGing algorithm.
We now summarize the main contributions:
\begin{enumerate}[\quad (1)]
\item The relationship and transformation process between gradient tracking algorithm and primal-dual algorithm are analyzed in detail.

\item Using the unbiased stochastic gradient of local objective function instead of the standard gradient, the S-DIGing algorithm significantly reduces the complexity and the computation cost, which means the S-DIGing algorithm can perform well in large-scale problems.

\item We establish a linear convergence rate for smooth and strongly-convex instantaneous functions when the fixed step-size is positive and do not exceed some explicit upper bound.

\item We cast a novel analytical framework that makes it easier to analyze the conditions of convergence and convergence rates. The relationship between convergence rate and step-size, parameters and network structure is given in this paper.

\item Comparing with the $\mathsf{\mathcal{S}}\text{-}\mathsf{\mathcal{A}\mathcal{B}}$ algorithm \cite{Xin2019a}, the S-DIGing algorithm can exactly converge to the global optimal solution instead of the neighborhood of the global optimal solution.
\end{enumerate}

We now organize the rest of this paper.
Section II formulates the optimization problem, states the network model, provides some necessary assumptions and describe problems of interest.
Section III presents the unbiased stochastic averaging gradient and proposed the S-DIGing algorithm.
The convergence analysis is provided in Section IV.
In order to experimentally verify the results of this paper, we provide simulation results of the S-DIGing algorithm in Section V.
Finally, Section VI summarizes the paper and envision future research.

\emph{Notations:} All vectors throughout the paper default to column vectors.
We write ${{x}^{\rm{T}}}$ and ${{A}^{\rm{T}}}$ to denote the transpose of a vector $x$ and a matrix $A$, respectively.
For a matrix $A$, we denote its $(i,j)$-th element by ${{A}_{ij}}$.
We use $\left\| \cdot  \right\|$ for both vectors and matrices, in the former case, $\left\| \cdot  \right\|$ represents the Euclidean norm whereas in the latter case it indicates the spectral norm.
The notation ${{1}_{n}}$ represents the $n$-dimensional vector of ones and ${{I}}$ represents the identity matrix with proper dimensions.
For a vector $x$, we use ${{\left\| x \right\|}_{G}}$ to denote the $G$-norm of $x$, i.e., ${{\left\| x \right\|}_{G}}=\sqrt{{{x}^{\text{T}}}Gx}$, where $G$ is a positive semi-definite matrix.
We denote by ${{\rho }_{1}(A)}\le {{\rho }_{2}(A)}\le\cdots  \le{{\rho }_{m}(A)}$ the eigenvalues of a real symmetric matrix $A\in {{\mathbb{R}}^{m\times m}}$.
% and define $\rho_{\min}(A)$ and $\rho_{\max}(A)$ as the smallest and the largest eigenvalues of $G$, respectively, i.e., ${{q}_{\min }}=\underset{i\in \mathsf{\mathcal{V}}}{\mathop{\min }}\,\{{{q}_{i}}\}$ and ${{q}_{\max }}=\underset{i\in \mathsf{\mathcal{V}}}{\mathop{\max }}\,\{{{q}_{i}}\}$.
A nonnegative vector is called stochastic if the sum of its elements equals to one.
A nonnegative square matrix is called row- (column-) stochastic if its rows (columns) are stochastic vectors, respectively.
We abbreviate independent and identically distributed to i.i.d..

\section{Problem Definition}

\subsection{Problem Formulation}
Consider a network containing $m$ agents and all agents aim at cooperatively solving the optimization problem as follows:
\begin{flalign}
\underset{\tilde{x}\in {{\mathbb{R}}^{n}}}{\mathop{\min }}\,\tilde{f}\left( {\tilde{x}} \right)=\sum\limits_{i=1}^{m}{{{{{f}}}_{i}}\left( {\tilde{x}} \right)}=\sum\limits_{i=1}^{m}{\frac{1}{{{q}_{i}}}\sum\limits_{h=1}^{{{q}_{i}}}{f_{i}^{h}\left( {\tilde{x}} \right)}}
\end{flalign}
where agent $i$ possesses exclusive knowledge of its local objective function $f_i$.
The goal is to seek the global optimal solution ${{\tilde{x}}^{*}}\in {{\mathbb{R}}^{n}}$ to (1) via only local computations and communication among agents.
%However, we note that problem (1) is not an objective function that can be directly solved by distributed algorithms.

Let $m$ agents be connected over an undirected graph, $\mathsf{\mathcal{G}}=\left( \mathsf{\mathcal{V}},\mathsf{\mathcal{E}},\mathsf{\mathcal{W}} \right)$, where $\mathsf{\mathcal{V}}=\left\{ 1,\ldots ,m \right\}$ is the set of agents, $\mathsf{\mathcal{E}}\subseteq \mathsf{\mathcal{V}}\times \mathsf{\mathcal{V}}$ is the collection of edges, and $\mathsf{\mathcal{W}}=\left[ {{w}_{ij}} \right]\in {{\mathbb{R}}^{m\times m}}$ indicates the weighted adjacency matrix where the weight $w_{ij}$ associated with edge $(i,j)$ satisfies: $w_{ij}>0$ if $(i,j)\in \mathsf{\mathcal{E}}$; and $w_{ij}=0$, otherwise.
Assume that $(i,i)\in \mathsf{\mathcal{E}}$ and set ${{w}_{ii}}=1-\sum\nolimits_{j=1,j\ne i}^{m}{{{w}_{ij}}}> 0$.
Two agents $i$ and $j$ can only communicate directly with each other if the edge $\left( i,j \right)\in \mathsf{\mathcal{E}}$.

Then, we equivalently reformulate the optimization problem (1) as follows:
\begin{flalign}
\begin{split}
&  \underset{{x}\in {{\mathbb{R}}^{mn}}}{\mathop{\min }}\,f\left( x \right)=\sum\limits_{i=1}^{m}{{{f}_{i}}\left( {{x}^{i}} \right)}=\sum\limits_{i=1}^{m}{\frac{1}{{{q}_{i}}}\sum\limits_{h=1}^{{{q}_{i}}}{f_{i}^{h}\left( {{x^{i}}} \right)}} \\
 &\;\;\;\text{s}\text{.t}\text{. }    \;\,  {{x}^{i}}={{x}^{j}}, \forall (i,j)\in \mathsf{\mathcal{E}}
\end{split}
\end{flalign}
where $x = {\left[ {({x^1})^{{\mathop{\rm T}\nolimits}} \text{,}{\kern 1pt}  \cdots \text{,}{\kern 1pt} {\kern 1pt} {\kern 1pt} ({x^m}) ^{\mathop{\rm T}\nolimits} } \right]^{\rm T}} \in {\mathbb{R}^{mn}}$.
%$x \in {\mathbb{R}^{mn}}$
%and each ${{f}_{i}}:{{\mathbb{R}}^{n}}\to \mathbb{R}$ is known only to agent $i$.
Therefore, the global optimal solution, ${{x}^{*}}\in {{\mathbb{R}}^{mn}}$, to problem (2) is equal to ${{1}_{m}}\otimes {{\tilde{x}}^{*}}$.
\begin{assumption}
The graph, $\mathsf{\mathcal{G}}$, is undirected and connected.
\end{assumption}
\begin{assumption}
Each instantaneous function $f_{i}^{h}$ is strongly convex and has Lipschitz continuous gradient, i.e., for all $a,b\in {{\mathbb{R}}^{n}}$, we have
\begin{flalign}
{{\left( \nabla f_{i}^{h}\left( a \right)-\nabla f_{i}^{h}\left( b \right) \right)}^{\rm{T}}}\left( a-b \right)\ge \mu {{\left\| a-b \right\|}^{2}}
\end{flalign}
and
\begin{flalign}
\left\| \nabla f_{i}^{h}\left( a \right)-\nabla f_{i}^{h}\left( b \right) \right\|\le {{L}_{f}}\left\| a-b \right\|
\end{flalign}
where ${{L}_{f}}>\mu>0$.
\end{assumption}
\subsection{Problems of Interest}
Problems of particular interest are those involving lots of miscellaneous local objective function in which exact calculation of the gradients are impossible or computationally intractable. Here we provide two such examples:
\subsubsection{Distributed Logistic Regression}
The purpose of distributed logistic regression is to predict the probability that the dependent variable $l_{i,h}$ is $+1$.
The probability can be computed as $P\left( \left. l_{i,h}=1 \right|c_{i,h} \right)= 1/(1+\exp ( -{{l}_{i,h}}c_{i,h}^{\text{T}}\tilde{x} ))$.
It follows from this model that the regularized maximum log likelihood estimate of the classifier $\tilde{x}$ given the training samples $\left( {{c}_{i,h}},{{l}_{i,h}} \right)$ for $h=1,\ldots ,{{q}_{i}}$ and $i=1,\ldots ,m$, is the optimal solution of the optimization problem
\begin{flalign*}
{{\tilde{x}}^{*}}=\arg \underset{\tilde{x}\in {{\mathbb{R}}^{n}}}{\mathop{\min }}\left(\frac{\lambda }{2}{{\left\| {\tilde{x}} \right\|}^{2}}+\sum\limits_{i=1}^{m}{\sum\limits_{h=1}^{{{q}_{i}}}{\log \left( 1+\exp \left( -{{l}_{i,h}}c_{i,h}^{\text{T}}\tilde{x} \right) \right)}}\right)
\end{flalign*}
where the regularization term $\frac{\lambda }{2}{{\left\| {\tilde{x}} \right\|}^{2}}$ is added to reduce over-fitting to the training set.
\subsubsection{Energy-Based Source Localization}
Estimating the location of an acoustic source is an important problem in both environment and military \cite{Chen2002}.
In this problem, an acoustic source is positioned at an unknown location, ${\tilde{x}}$, in a sensor field.
We use an isotropic energy propagation model for the $h$-th received signal strength measurement at each agent $i\in \mathsf{\mathcal{V}}$: ${{c}_{i,h}}=\frac{a}{{{\left\| \tilde{x}-{{r}_{i}} \right\|}^{\theta }}}+{{\upsilon }_{i,h}}$, $h=1,\ldots ,{{q}_{i}}$, where $a>0$ is a constant and ${{r}_{i}}\in {{\mathbb{R}}^{n}}$ is the location of agent $i$ relative to a fixed reference point.
The exponent $\theta \ge 1$ describes an attenuation characteristic of the medium through which the acoustic signal
propagates, and ${{\upsilon }_{i,h}}$ are i.i.d. samples of a zero-mean Gaussian noise process with variance ${{\sigma }^{2}}$.
A maximum likelihood estimate for the source's location is found by solving
\begin{flalign*}
{{{\tilde{x}}}^{*}}=\arg \underset{\tilde{x}\in {{\mathbb{R}}^{n}}}{\mathop{\min }}\,\sum\limits_{i=1}^{m}{\frac{1}{{{q}_{i}}}\sum\limits_{h=1}^{{{q}_{i}}}{\left( {{c}_{i,h}}-\frac{a}{{{\left\| \tilde{x}-{{r}_{i}} \right\|}^{\theta }}} \right)}}^2
\end{flalign*}
\section{Algorithm Development}
\subsection{Review of Gradient Tracking Algorithm}
We now review the gradient tracking (DIGing) algorithm \cite{Nedic2017c} as follows:
%We obtain the iterates as follows:
\begin{flalign}
  & x_{k+1}^{i}=\sum\limits_{j=1}^{m}{{{w}_{ij}}x_{k}^{j}}-\alpha y_{k}^{i} \tag{5a}\\
 & y_{k+1}^{i}=\sum\limits_{j=1}^{m}{{{w}_{ij}}y_{k}^{j}}+\nabla {{f}_{i}}\left( {{x}_{k+1}^i} \right)-\nabla {{f}_{i}}\left( {{x}_{k}^i} \right) \tag{5b}
\end{flalign}
where $\alpha$ is the fixed step-size.
At each time instant $k$, agent $i\in \mathsf{\mathcal{V}}$ maintains two variables, $x_{k}^{i},y_{k}^{i}\in {{\mathbb{R}}^{n}}$, initialized with $x_0^i=0$ and $y_{0}^{i}=\nabla {{f}_{i}}\left( x_{0}^{i} \right)$.
The update of $x_k^i$ of agent $i$ is classically gradient descent, and the descent direction is given by an estimate of the global gradient, $y_{k}^{i}$, instead of the local gradient, $\nabla {{f}_{i}}\left( x_{k}^{i} \right)$.
The update of $y_k^i$ of agent $i$ tracks the global gradient and is based on weight matrix.

For the convenience of analysis, the variables ${{x}_{k}}$ and ${{y}_{k}}$ collect the local variables ${{x}_{k}^i}$ and ${{y}_{k}^i}$ in a vector form, respectively, i.e., ${x_k} = {[{(x_k^1)^{\rm{T}}}, \ldots ,{(x_k^m)^{\rm{T}}}]^{\rm{T}}}$ and ${y_k} = {[{(y_k^1)^{\rm{T}}}, \ldots ,{(y_k^m)^{\rm{T}}}]^{\rm{T}}}$.
Defining $\nabla F\left( {{x}_{k}} \right)={{\left[ \nabla f_{1}^{\rm{T}}\left( x_{k}^{1} \right),\cdots ,\nabla f_{m}^{\rm{T}}\left( x_{k}^{m} \right) \right]}^{\rm{T}}}$.
Algorithm (5) can be equivalently rewritten as:
\begin{flalign}
  & {{x}_{k+1}}=W{{x}_{k}}-\alpha {{y}_{k}} \tag{6a}\label{8a}\\
 & {{y}_{k+1}}=W{{y}_{k}}+\nabla F\left( {{x}_{k+1}} \right)-\nabla F\left( {{x}_{k}} \right) \tag{6b}\label{8b}
\end{flalign}
where $W=\left( \mathsf{\mathcal{W}}\otimes I \right)$, ${{x}_{0}}=0$ and ${{y}_{0}}=\nabla F\left( {{x}_{0}} \right)$.
\setcounter{equation}{6}
\subsection{Unbiased Stochastic Averaging Gradient}
Recall that the definitions of the local function $f_{i}(x^i)$ and the instantaneous functions $f_{i}^{h}(x^i)$ available at agent $i$, the implementation of gradient tracking algorithm requires that each agent $i$ computes the full gradient of its instantaneous functions $f_i^{h}$ at $x_{k}^{i}$ as
\begin{flalign}
\nabla {{f}_{i}}\left( x_{k}^{i} \right)=\frac{1}{{{q}_{i}}}\sum\limits_{h=1}^{{{q}_{i}}}{\nabla f_{i}^{h}\left( x_{k}^{i} \right)}
\end{flalign}
This is computationally expensive especially when the number of instantaneous functions $q_{i}$ is large.
To solve this issue, we utilize a localized SAGA technology inspired by \cite{Mokhtari2016}.
An unbiased stochastic averaging gradient is employed to substitute the costly full gradient computation.
%To resolve this issue, local stochastic gradients can be substituted for the local objective functions gradients.
It approximates the gradient $\nabla f_{i}(x_{k}^{i})$ of agent $i$ at time instant $k$ by randomly choosing
one of the instantaneous functions gradients $\nabla f_{i}^{h}(x_{k}^{i})$, $h\in \{1,\dots,q_{i}\}$.
Let $t_{k+1}^{i}\in \{1,\ldots ,{{q}_{i}}\}$ denote a function index that we choose at time instant $k$ on agent $i$ uniformly at random.
For agent $i$, the update $z_{k+2}^{i,h}$ can be presented as follows:
\begin{flalign*}
\left\{ \begin{matrix}
   z_{k+2}^{i,h}=x_{k+1}^{i},  \;\; \quad  \text{if} \quad h=t_{k+1}^{i}  \\
   z_{k+2}^{i,h}=z_{k+1}^{i,h},  \;\;\quad \text{if} \quad h\ne t_{k+1}^{i}  \\
\end{matrix} \right.
\end{flalign*}
where $z_{1}^{i,h} = z_{0}^{i,h} = x_{0}^{i}=0$, $\forall h=1,\ldots ,{{q}_{i}}$.
Then, we define the stochastic averaging gradient at agent $i$ as
\begin{flalign}\label{g_k^i}
g_{k+1}^{i}=\nabla f_{i}^{t_{k+1}^{i}}( x_{k+1}^{i} )-\nabla f_{i}^{t_{k+1}^{i}}( z_{k+1}^{i,t_{k+1}^{i}} )+\frac{1}{{{q}_{i}}}\sum\limits_{h=1}^{{{q}_{i}}}{\nabla f_{i}^{h}( z_{k+1}^{i,h} )}
\end{flalign}
Letting ${{\mathsf{\mathcal{F}}}_{k}}$ measure the history of the system up until time instant $k$, we have $\mathbb{E}\left[ \left. g_{k}^{i } \right|{{\mathsf{\mathcal{F}}}_{k}} \right]=\nabla {{f}_{i}}\left( x_{k}^{i} \right)$, $\forall i\in \mathsf{\mathcal{V}}$, which means that the stochastic averaging gradient is unbiased.
\begin{remark}
Consider Eq. \eqref{g_k^i}. Computation of local averaging gradient $g_{k}^{i}$ is costly because it requires evaluation of a sum $\sum\nolimits_{h=1}^{{{q}_{i}}}{\nabla f_{i}^{h}\left( z_{k}^{i,h} \right)}$ at each time instant \cite{Mokhtari2016}. This cost can be avoided by
updating the sum at each time instant with the recursive formula
\begin{flalign}\label{y}
\begin{split}
  &    \sum\limits_{h=1}^{{{q}_{i}}}{\nabla f_{i}^{h}( z_{k}^{i,h} )} \\
 =&\sum\limits_{h=1}^{{{q}_{i}}}{\nabla f_{i}^{h}( z_{k-1}^{i,h} )}+\nabla f_{i}^{t_{k}^{i}}( x_{k-1}^{i} )-\nabla f_{i}^{t_{k}^{i}}( z_{k-1}^{i,t_{k-1}^{h}} )
 \end{split}
\end{flalign}
Using the update in \eqref{y}, we can update the sum $\sum\nolimits_{h=1}^{{{q}_{i}}}{\nabla f_{i}^{h}( z_{k}^{i,h} )}$ required for (8) in a computationally efficient manner.

\end{remark}
\subsection{Stochastic Gradient Tracking Algorithm}
To solve problem (2) in a computation-efficient way, we propose a stochastic gradient tracking (S-DIGing) algorithm as shown in Algorithm 1 by combining the gradient tracking algorithm with unbiased stochastic averaging gradients technology.
\begin{algorithm}[!ht]
	\caption{: S-DIGing}
	\begin{algorithmic}[1]
		\STATE \textbf{Initialization:} Each agent $i\in \mathcal{V}$ initializes with $z_{1}^{i,h} = z_{0}^{i,h} = x_{0}^{i}=0$, $h=1,\ldots ,{{q}_{i}}$, and $g_  {0}^{i}=y_{0}^{i}=\nabla {{f}_{i}}\left( {{x}_{0}^{i}} \right)$.
        \STATE \textbf{Set} $k=0$.
		\STATE \textbf{For} $i=1$ to $m$ \textbf{do}
		\STATE  \quad Update variable $x_{k+1}^{i}$ as
\begin{flalign*}
  x_{k+1}^{i}=\sum\limits_{j=1}^{m}{{{w}_{ij}}x_{k}^{j}}-\alpha y_{k}^{i}
\end{flalign*}
		\STATE  \quad Choose $t_{k+1}^{i}$ uniformly at random from set $\{1,\ldots ,{{q}_{i}}\}$
		\STATE  \quad Take $z_{k+2}^{i,t_{k+1}^{i}}=x_{k+1}^{i}$ and store $\nabla f_{i}^{t_{k+1}^{i}}( z_{k+2}^{i,t_{k+1}^{i}} )=\nabla f_{i}^{t_{k+1}^{i}}\left( x_{k+1}^{i} \right)$ in $i_{k+1}^{h}$ gradient table position. All other entries in the table remain unchanged, i.e., $z_{k+2}^{i,h}=z_{k+1}^{i,h}$ for all $h\ne t_{k+1}^{i}$
		\STATE  \quad Compute and store $g_{k+1}^{i}$ as (8)
%\begin{flalign*}
%g_{k+1}^{i}=\nabla f_{i}^{t_{k+1}^{i}}( x_{k+1}^{i} )-\nabla f_{i}^{t_{k+1}^{i}}( z_{k+1}^{i,t_{k}^{i}} )+\frac{1}{{{q}_{i}}}\sum\limits_{h=1}^{{{q}_{i}}}{\nabla f_{i}^{h}( z_{k+1}^{i,h} )}
%\end{flalign*}
  		\STATE  \quad Update variable $y_{k+1}^{i}$ as
\begin{flalign*}
 y_{k+1}^{i}=\sum\limits_{j=1}^{m}{{{w}_{ij}}y_{k}^{j}}+g_{k+1}^{i}-g_{k}^{i} %\tag{5b}
\end{flalign*}
        \STATE \textbf{Set} $k\to k+1$ and go to Step 3 until a certain stopping criterion is satisfied, e.g., the maximum number of iterations.
	\end{algorithmic}
\end{algorithm}

Recall that the definitions of $x_{k}$ and $y_{k}$, algorithm 1 can be equivalently rewritten as the following matrix-vector form:
\begin{align}
  & {{x}_{k+1}}=W{{x}_{k}}-\alpha {{y}_{k}} \tag{10a}\\
 & {{y}_{k+1}}=W{{y}_{k}}+{{g}_{k+1}}-{{g}_{k}} \tag{10b}
\end{align}
where ${{x}_{0}}=0$, ${{y}_{0}}={{g}_{0}}$ and ${{g}_{k}}={{\left[ {{\left( g_{k}^{1} \right)}^{\text{T}}},\ldots ,{{\left( g_{k}^{m} \right)}^{\text{T}}} \right]}^{\text{T}}}$.
\setcounter{equation}{10}
\subsection{Primal-Dual Interpretation of S-DIGing}
Considering problem (2), the constraints ${{x}_{i}}={{x}_{j}}$,$\forall \left( i,j \right)\in \mathsf{\mathcal{E}}$, can be equivalently written as a matrix-vector form of $( {\tilde{L}}\otimes I )x=0$, where ${\tilde{L}}=\left[ {{l}_{ij}} \right]\in {{\mathbb{R}}^{m\times m}}$ is a matrix satisfying  ${{l}_{ij}}=-{{w}_{ij}}$ for $i\ne j$ and ${{l}_{ij}}=1-{{w}_{ij}}$ for $i=j$.
%By denoting $L=\left( \left( I-W \right)\otimes {{I}_{n}} \right)$,
Letting $L = ({\tilde{L}}\otimes I)$, the augmented Lagrange function is constructed as follows:
\begin{flalign*}
\mathsf{\mathcal{L}}\left( x,\lambda  \right)=f\left( x \right)+\frac{1}{\alpha }{{\lambda }^{\text{T}}}Lx+\frac{1}{2\alpha }{{x}^{\text{T}}}\left( I-{{W}^{2}} \right)x
\end{flalign*}
where $\lambda$ is the Lagrange multiplier and $\alpha>0$.
Thus, the constrained optimization problem (2) can be transformed into a saddle point finding problem.
Considering the partial derivatives ${{\partial }_{x}}\mathsf{\mathcal{L}}\left( x,\lambda \right)=\nabla f\left( x \right)+(1/\alpha )L\lambda +(1/\alpha )(I-{{W}^{2}})x$ and ${{\partial }_{\lambda }}\mathsf{\mathcal{L}}\left( x,\lambda  \right)=(1/\alpha )Lx$, we describe the classical primal-dual algorithm solving problem (2) as follows:
%\begin{align*}
%  & x_{k+1}^{i}=x_{k}^{i}-\alpha ( \nabla {{f}_{i}}\left( x_{k}^{i} \right)+\sum\limits_{j=1}^{m}{{{l}_{ij}}\lambda _{k}^{j}} ) \tag{11a}\\
% & \lambda _{k+1}^{i}=\lambda _{k}^{i}+\alpha \sum\limits_{j=1}^{m}{{{l}_{ij}}x_{k+1}^{j}} \tag{11b}
%\end{align*}
%where ${{x}_{0}^{i}}=0$, $\lambda_{0}^{i}=0$, $\forall i\in \mathsf{\mathcal{V}}$.
%Following our notation, we rewrite algorithm (11) for all the agents together as
\begin{align*}
  & {{x}_{k+1}}={{x}_{k}}-\alpha \left( \nabla F\left( {{x}_{k}} \right)+\frac{1}{\alpha }L{{\lambda }_{k}}+\frac{1}{\alpha }\left( I-{{W}^{2}} \right){{x}_{k}} \right) \\
 & {{\lambda }_{k+1}}={{\lambda }_{k}}+\alpha \left( \frac{1}{\alpha }L{{x}_{k+1}} \right)
\end{align*}
which is equal to
\begin{align}
%  & {{x}_{k+1}}={{x}_{k}}-\alpha \left( \nabla F\left( {{x}_{k}} \right)+{{L}}{{\lambda }_{k}} \right) \tag{12a}\\
% & {{\lambda }_{k+1}}={{\lambda }_{k}}+\alpha L{{x}_{k}} \tag{12b}
  & {{x}_{k+1}}={{W}^{2}}{{x}_{k}}-\alpha \nabla F\left( {{x}_{k}} \right)-L\lambda_{k}  \tag{11a}\\
 & {{\lambda }_{k+1}}={{\lambda }_{k}}+L{{x}_{k+1}} \tag{11b}
\end{align}
where $x_{0}=0$, $\lambda_{0} = 0$ and $\alpha$ is the step-size.
\setcounter{equation}{11}
It can be found that the $x_{k}$-update at each agent is essentially gradient descent and the $\lambda_{k}$-update at each agent is gradient ascent.

%Then, we show that how the S-DIGing algorithm (10) is related to the primal-dual algorithm (11).
Then, we establish a relationship between the S-DIGing algorithm (10) and the primal-dual algorithm (11).
Rewriting algorithm (10) recursively, we get, $\forall k\ge 1$,
\begin{flalign}
{{x}_{k+1}}&=2W{{x}_{k}}-{{W}^{2}}{{x}_{k-1}}-\alpha \left( g_{k}-g_{k-1} \right) \label{E13}
\end{flalign}
Since $x_{0}=0$ and $x_{1}=Wx_{0}-\alpha g_{0}$, subtracting $x_{k}$ from both sides of \eqref{E13}, we obtain, for all $k\ge1$,
\begin{flalign}\label{E15}
{{x}_{k+1}}-{{x}_{k}}={(2{W}-I)}{{x}_{k}}-{{W}^{2}}{{x}_{k-1}}-\alpha \left( g_{k}-g_{k-1} \right)
\end{flalign}
Adding the first update ${{x}_{1}}=W{{x}_{0}}-\alpha {{g}_{0}}$ to the subsequent updates following the formulas of $\left( {{x}_{2}}-{{x}_{1}} \right)$, $\left( {{x}_{3}}-{{x}_{2}} \right)$, $\dots$, $\left( {{x}_{k+1}}-{{x}_{k}} \right)$ given by \eqref{E15} and applying telescopic cancellation, we have
\begin{flalign}\label{E16}
{{x}_{k+1}}={{W}^{2}}{{x}_{k}}-\alpha g_{k}-\sum\limits_{s=0}^{k}{\left( I-W \right)^2{{x}_{s}}}
\end{flalign}
%Considering $(I-W)^2\ge0$ and l
Letting ${{U}}={{ I-W }} \ge 0$, we rewrite \eqref{E16} as
\begin{flalign}\label{E17}
{{x}_{k+1}}={{W}^{2}}{{x}_{k}}-\alpha g_{k}-U\sum\limits_{s=0}^{k}{U{{x}_{s}}}
\end{flalign}
Defining ${{\lambda }_{k}}=\sum\nolimits_{s=0}^{k}{U{{x}_{s}}}$, Eq. \eqref{E17} is equal to
\begin{flalign}
  & {{x}_{k+1}}={{W}^{2}}{{x}_{k}}-\alpha g_{k}-L{{\lambda }_{k}} \tag{16a}\\
 & {{\lambda }_{k+1}}={{\lambda }_{k}}+L{{x}_{k+1}} \tag{16b}
\end{flalign}
where $L=U=I-W$, $x_{0}=0$ and $\lambda_{0}=0$.
It can be found that the algorithm (16) is completely equivalent to the stochastic gradient form of the primal-dual algorithm with $x_{0}=0$ and $\lambda_{0}=0$.
\setcounter{equation}{16}
\section{Convergence Analysis}
In this section, we show the convergence analysis process of Algorithm 1.
Defining ${{V}_{1}}\left( {{x}_{k}},{{\lambda }_{k}} \right)=\left\| {{x}_{k}}-{{x}^{*}} \right\|_{{{W}^{2}}}^{2}+{{\left\| {{\lambda }_{k}}-{{\lambda }^{*}} \right\|}^{2}}$, for all $k\ge0$, we have
\begin{flalign}\label{E19}
\begin{split}
  & {{V}_{1}}\left( {{x}_{k+1}},{{\lambda }_{k+1}} \right)-{{V}_{1}}\left( {{x}_{k}},{{\lambda }_{k}} \right) \\
 =&-\left\| {{x}_{k+1}}-{{x}_{k}} \right\|_{{{W}^{2}}}^{2}-{{\left\| {{\lambda }_{k+1}}-{{\lambda }_{k}} \right\|}^{2}}+2{{\left( {{x}_{k+1}}-{{x}^{*}} \right)}^{\rm{T}}}\\
 &\times {{W}^{2}}\left( {{x}_{k+1}}-{{x}_{k}} \right)+2{{\left( {{\lambda }_{k+1}}-{{\lambda }^{*}} \right)}^{\rm{T}}}\left( {{\lambda }_{k+1}}-{{\lambda }_{k}} \right)
 \end{split}
\end{flalign}
Next, we state an upper bound for the third term of the right hand side of \eqref{E19}.
\begin{lemma}
Consider the algorithm in (16) and let Assumptions 1 and 2 hold. For all $k \ge 0$, we have
\begin{flalign*}
  & 2{{\left( {{x}_{k+1}}-{{x}^{*}} \right)}^{\rm{T}}}{{W}^{2}}\left( {{x}_{k+1}}-{{x}_{k}} \right) \\
 \le & \left\| {{x}_{k+1}}-{{x}^{*}} \right\|_{2\left( {{W}^{2}}-I+{{L}^{2}} \right)-\alpha \left( 2\mu -\phi  \right)I}^{2} \\
 &   -2{{\left( {{\lambda }_{k+1}}-{{\lambda }_{k}} \right)}^{\rm{T}}}\left( {{\lambda }_{k+1}}-{{\lambda }^{*}} \right) \\
 &    +\alpha ( \eta +\frac{L_{f}^{2}}{\phi } ){{\left\| {{x}_{k+1}}-{{x}_{k}} \right\|}^{2}}+\frac{\alpha }{\eta }{{\left\| {{g}_{k}}-\nabla F({{x}_{k}}) \right\|}^{2}} \\
 &    -2\alpha {{\left( {{x}_{k}}-{{x}^{*}} \right)}^{\rm{T}}}\left( {{g}_{k}}-\nabla F({{x}_{k}}) \right)
\end{flalign*}
where $\eta >0$ and $0<\phi<2\mu$.
\end{lemma}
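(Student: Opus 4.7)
The plan is to expand $2(x_{k+1}-x^*)^{\rm T}W^2(x_{k+1}-x_k)$ by substituting the $x$-update from (16a), then systematically replace $g_k$ by $\nabla F(x^*)$ plus correction terms and invoke the standard strong-convexity / Lipschitz / Young inequalities. The KKT-type identity $\alpha\nabla F(x^*)+L\lambda^*=0$, together with $Lx^*=0$ and $Wx^*=x^*$, will be the bridge that produces the dual pieces.

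First, I would rewrite $W^2(x_{k+1}-x_k)=(W^2-I)x_{k+1}-\alpha g_k-L\lambda_k$ by subtracting $x_{k+1}$ from both sides of (16a). Using $(W^2-I)x^*=0$ and the stationarity condition $\alpha\nabla F(x^*)=-L\lambda^*$, this gives
\begin{flalign*}
& 2(x_{k+1}-x^*)^{\rm T}W^2(x_{k+1}-x_k) \\
=\,& 2\|x_{k+1}-x^*\|_{W^2-I}^2 -2(x_{k+1}-x^*)^{\rm T}L(\lambda_k-\lambda^*) \\
& -2\alpha(x_{k+1}-x^*)^{\rm T}\bigl(g_k-\nabla F(x^*)\bigr).
\end{flalign*}

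Next, I would convert the dual cross-term into the form demanded by the statement. Because $L$ is symmetric and $Lx^*=0$, (16b) gives $L(x_{k+1}-x^*)=\lambda_{k+1}-\lambda_k$, so
\begin{flalign*}
-2(x_{k+1}-x^*)^{\rm T}L(\lambda_k-\lambda^*) = -2(\lambda_{k+1}-\lambda_k)^{\rm T}(\lambda_k-\lambda^*).
\end{flalign*}
Using the identity $-2(\lambda_{k+1}-\lambda_k)^{\rm T}(\lambda_k-\lambda^*)=-2(\lambda_{k+1}-\lambda_k)^{\rm T}(\lambda_{k+1}-\lambda^*)+2\|\lambda_{k+1}-\lambda_k\|^2$ and rewriting $\|\lambda_{k+1}-\lambda_k\|^2=\|L(x_{k+1}-x^*)\|^2=\|x_{k+1}-x^*\|_{L^2}^2$ recovers exactly the $-2(\lambda_{k+1}-\lambda_k)^{\rm T}(\lambda_{k+1}-\lambda^*)$ term together with the $2L^2$ contribution to the weighted-norm coefficient.

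Then I would split the gradient piece as
\begin{flalign*}
 g_k-\nabla F(x^*) = \bigl(\nabla F(x_{k+1})-\nabla F(x^*)\bigr) \\ +\bigl(\nabla F(x_k)-\nabla F(x_{k+1})\bigr)+\bigl(g_k-\nabla F(x_k)\bigr).
\end{flalign*}
The first chunk absorbs $-2\alpha\mu\|x_{k+1}-x^*\|^2$ via strong convexity (3). The second is handled by Young's inequality with parameter $\phi$ followed by the Lipschitz bound (4), producing $\alpha\phi\|x_{k+1}-x^*\|^2+(\alpha L_f^2/\phi)\|x_{k+1}-x_k\|^2$. For the stochastic part I would split $x_{k+1}-x^*=(x_k-x^*)+(x_{k+1}-x_k)$: the $(x_k-x^*)$ piece becomes the last ``residual'' term in the lemma, while the $(x_{k+1}-x_k)$ piece is bounded by Young's inequality with parameter $\eta$, yielding $\alpha\eta\|x_{k+1}-x_k\|^2+(\alpha/\eta)\|g_k-\nabla F(x_k)\|^2$.

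Finally I would collect the $\|x_{k+1}-x^*\|^2$ contributions into the single weighted norm $\|x_{k+1}-x^*\|_{2(W^2-I+L^2)-\alpha(2\mu-\phi)I}^{2}$ to match the statement. The only non-routine step is the bookkeeping around the dual cross-term: choosing to ``promote'' $\lambda_k$ to $\lambda_{k+1}$ so that the $L^2$ residual combines cleanly with $W^2-I$ in the final quadratic form is the one algebraic move that has to be seen in advance; the remaining estimates are textbook applications of strong convexity, Lipschitz continuity, and Young's inequality.
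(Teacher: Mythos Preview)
Your proposal is correct and follows essentially the same route as the paper. The only cosmetic difference is timing: the paper substitutes $\lambda_k=\lambda_{k+1}-Lx_{k+1}$ \emph{before} forming the inner product with $2(x_{k+1}-x^*)$, so the $L^2$ contribution and the term $-2(\lambda_{k+1}-\lambda_k)^{\rm T}(\lambda_{k+1}-\lambda^*)$ appear immediately, whereas you keep $\lambda_k$ and perform the ``promotion'' afterwards via the identity $-2(\lambda_{k+1}-\lambda_k)^{\rm T}(\lambda_k-\lambda^*)=-2(\lambda_{k+1}-\lambda_k)^{\rm T}(\lambda_{k+1}-\lambda^*)+2\|x_{k+1}-x^*\|_{L^2}^2$; the gradient splitting and the Young-inequality bounds with parameters $\phi$ and $\eta$ are identical to the paper's.
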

\begin{proof}
Recalling ${{x}_{k+1}}={{W}^{2}}{{x}_{k}}-\alpha {{g}_{k}}-L{{\lambda }_{k}}$, we have
\begin{flalign}\label{L1L1}
\begin{split}
&\alpha {{g}_{k}}\\
  =&{{W}^{2}}{{x}_{k}}-{{x}_{k+1}}-L{{\lambda }_{k+1}}+{{L}^{2}}{{x}_{k+1}} \\
  =&{{W}^{2}}\left( {{x}_{k}}-{{x}_{k+1}} \right)+\left( {{W}^{2}}-I+{{L}^{2}} \right){{x}_{k+1}}-L{{\lambda }_{k+1}}
\end{split}
\end{flalign}
By subtracting $\alpha \nabla F\left( {{x}^{*}} \right)$ from the both sides of \eqref{L1L1} and considering the fact $\alpha \nabla F\left( {{x}^{*}} \right)=-L{{\lambda }^{*}}$, one has
\begin{flalign}\label{L1L2}
  &    \alpha \left( {{g}_{k}}-\nabla F\left( {{x}^{*}} \right) \right) \notag\\
  =&{{W}^{2}}\left( {{x}_{k}}-{{x}_{k+1}} \right)+\left( {{W}^{2}}-I+{{L}^{2}} \right)\left( {{x}_{k+1}}-{{x}^{*}} \right) \tag{19}\\
  &-L\left( {{\lambda }_{k+1}}-{{\lambda }^{*}} \right) \notag
\end{flalign}
Multiplying both sides of \eqref{L1L2} by $2{{({{x}_{k+1}}-{{x}^{*}})}^{\text{T}}}$, we obtain
\begin{flalign}\label{Eqq}
  & 2{{\left( {{x}_{k+1}}-{{x}^{*}} \right)}^{{\rm{T}}}}{{W}^{2}}\left( {{x}_{k+1}}-{{x}_{k}} \right) \notag\\
 =&\left\| {{x}_{k+1}}-{{x}^{*}} \right\|_{2\left( {{W}^{2}}-I+{{L}^{2}} \right)}^{2}-2{{\left( {{x}_{k+1}}-{{x}^{*}} \right)}^{\rm{T}}}L\left( {{\lambda }_{k+1}}-{{\lambda }^{*}} \right) \notag\\
 &    -2\alpha {{\left( {{x}_{k+1}}-{{x}^{*}} \right)}^{\rm{T}}}\left( {{g}_{k}}-\nabla F\left( {{x}^{*}} \right) \right) \tag{20}
\end{flalign}
Next, we establish an upper bound of the third term of the right hand side of Eq. \eqref{Eqq}. Using the  basic inequality: $2{{a}^{\text{T}}}b\le (1/\phi){{\left\| a \right\|}^{2}}+\phi {{\left\| b \right\|}^{2}}$, $\forall a\in {{\mathbb{R}}^{n}},b\in {{\mathbb{R}}^{n}}$, $\phi >0$, we get
\begin{flalign}
  & {{\left( {{x}_{k+1}}-{{x}^{*}} \right)}^{\rm{T}}}\left( {{g}_{k}}-\nabla F({{x}^{*}}) \right) \notag\\
 =&{{\left( {{x}_{k+1}}-{{x}_{k}} \right)}^{\rm{T}}}\left( {{g}_{k}}-\nabla F({{x}_{k}}) \right)+{{\left( {{x}_{k}}-{{x}^{*}} \right)}^{\rm{T}}}\left( {{g}_{k}}-\nabla F({{x}_{k}}) \right) \notag\\
 & +{{\left( {{x}_{k+1}}-{{x}^{*}} \right)}^{\rm{T}}}\left( \nabla F({{x}_{k}})-\nabla F({{x}_{k+1}}) \right) \notag\\
 & +{{\left( {{x}_{k+1}}-{{x}^{*}} \right)}^{\rm{T}}}\left( \nabla F({{x}_{k+1}})-\nabla F({{x}^{*}}) \right) \notag\\
 \ge & -\frac{\eta }{2}{{\left\| {{x}_{k+1}}-{{x}_{k}} \right\|}^{2}}-\frac{1}{2\eta }{{\left\| {{g}_{k}}-\nabla F({{x}_{k}}) \right\|}^{2}} \tag{21}\\
 & +{{\left( {{x}_{k}}-{{x}^{*}} \right)}^{\rm{T}}}\left( {{g}_{k}}-\nabla F({{x}_{k}}) \right)-\frac{\phi }{2}{{\left\| {{x}_{k+1}}-{{x}^{*}} \right\|}^{2}} \notag\\
 & -\frac{L_{f}^{2}}{2\phi }{{\left\| {{x}_{k+1}}-{{x}_{k}} \right\|}^{2}}+\mu {{\left\| {{x}_{k+1}}-{{x}^{*}} \right\|}^{2}}\notag
\end{flalign}
where $\eta >0$ and $0 < \phi  < 2\mu$.
The proof is completed.
\end{proof}

Combining Lemma 1 and \eqref{E19}, it follows
\begin{flalign}\label{E23}
  & {{V}_{1}}\left( {{x}_{k+1}},{{\lambda }_{k+1}} \right)-{{V}_{1}}\left( {{x}_{k}},{{\lambda }_{k}} \right) \notag\\
\le &-\left\| {{x}_{k+1}}-{{x}_{k}} \right\|_{{{W}^{2}}-\alpha ( \eta +\frac{L_{f}^{2}}{\phi } )I}^{2} \notag\\
&+\left\| {{x}_{k+1}}-{{x}^{*}} \right\|_{2\left( {{W}^{2}}-I+{{L}^{2}} \right)-{{L}^{2}}-\alpha \left( 2\mu -\phi  \right)I}^{2} \tag{22}\\
 &    +\frac{\alpha }{\eta }{{\left\| {{g}_{k}}-\nabla F({{x}_{k}}) \right\|}^{2}}-2\alpha {{\left( {{x}_{k}}-{{x}^{*}} \right)}^{\rm{T}}}\left( {{g}_{k}}-\nabla F({{x}_{k}}) \right) \notag
\end{flalign}
In order to process the right hand of \eqref{E23}, we introduce two important supporting lemmas.
\setcounter{equation}{23}
\begin{lemma}
[\cite{Mokhtari2016}] If Assumptions 1 and 2 hold, the squared norm of the difference between the stochastic averaging gradient $g_{k}$ and the optimal gradient $\nabla F\left( {{x}^{*}}\right)$ in expectation is bounded by
\begin{flalign*}
  & \mathbb{E}\left[ \left. {{\left\| {{g}_{k}}-\nabla F\left( {{x}^{*}} \right) \right\|}^{2}} \right|{{\mathsf{\mathcal{F}}}_{k}} \right] \\
 \le & 4{{L}_{f}}{{p}_{k}}+2\left( 2L_{f}-\mu  \right)\left( f\left( {{x}_{k}} \right)-f\left( {{x}^{*}} \right)-\left\langle \nabla F\left( {{x}^{*}} \right),{{x}_{k}}-{{x}^{*}} \right\rangle  \right)
\end{flalign*}
where
\begin{flalign*}
{{p}_{k}}=\sum\limits_{i=1}^{m}{\frac{1}{{{q}_{i}}}\sum\limits_{h=1}^{{{q}_{i}}}{\left( f_{i}^{h}( y_{k}^{i,h} )-f_{i}^{h}\left( {{{\tilde{x}}}^{*}} \right)-\left\langle \nabla f_{i}^{h}\left( {{{\tilde{x}}}^{*}} \right),y_{k}^{i,h}-{{{\tilde{x}}}^{*}} \right\rangle  \right)}}
\end{flalign*}
\end{lemma}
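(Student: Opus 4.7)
The plan is to verify that the SAGA-style variance analysis from \cite{Mokhtari2016} transfers to the present multi-agent, stacked-vector setting. Since the random indices $t_{k+1}^{1},\dots,t_{k+1}^{m}$ are drawn independently across agents and independently of $\mathsf{\mathcal{F}}_{k}$, the conditional expectation $\mathbb{E}[\|g_{k}-\nabla F(x^{*})\|^{2}\mid \mathsf{\mathcal{F}}_{k}]$ decouples into a sum of per-agent contributions, so the task reduces to bounding $\mathbb{E}[\|g_{k}^{i}-\nabla f_{i}(\tilde{x}^{*})\|^{2}\mid \mathsf{\mathcal{F}}_{k}]$ for each fixed $i$ and then summing.

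For a fixed agent $i$, write $\tau = t_{k}^{i}$ and exploit the SAGA identity $\mathbb{E}[\nabla f_{i}^{\tau}(z_{k}^{i,\tau})\mid \mathsf{\mathcal{F}}_{k}] = (1/q_{i})\sum_{h=1}^{q_{i}}\nabla f_{i}^{h}(z_{k}^{i,h})$ to rewrite
\begin{flalign*}
g_{k}^{i} - \nabla f_{i}(\tilde{x}^{*}) = A_{i} - \bigl(B_{i} - \mathbb{E}[B_{i}\mid \mathsf{\mathcal{F}}_{k}]\bigr),
\end{flalign*}
with $A_{i}:=\nabla f_{i}^{\tau}(x_{k}^{i})-\nabla f_{i}^{\tau}(\tilde{x}^{*})$ and $B_{i}:=\nabla f_{i}^{\tau}(z_{k}^{i,\tau})-\nabla f_{i}^{\tau}(\tilde{x}^{*})$. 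Applying $\|a-b\|^{2}\le 2\|a\|^{2}+2\|b\|^{2}$ in conjunction with the elementary variance bound $\mathbb{E}\|Z-\mathbb{E}Z\|^{2}\le \mathbb{E}\|Z\|^{2}$ reduces the problem to upper-bounding $\mathbb{E}\|A_{i}\|^{2}$ and $\mathbb{E}\|B_{i}\|^{2}$ separately.

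For the $B_{i}$-piece I would invoke the standard smoothness-to-co-coercivity inequality $\|\nabla f_{i}^{h}(a) - \nabla f_{i}^{h}(\tilde{x}^{*})\|^{2} \le 2L_{f}\bigl(f_{i}^{h}(a) - f_{i}^{h}(\tilde{x}^{*}) - \langle \nabla f_{i}^{h}(\tilde{x}^{*}), a-\tilde{x}^{*}\rangle\bigr)$ at $a=z_{k}^{i,h}$, average uniformly in $h$, and sum over agents; this yields precisely the $4L_{f} p_{k}$ contribution of the claim. For the $A_{i}$-piece I would apply the sharpened version of the same inequality available under $\mu$-strong convexity of each $f_{i}^{h}$ (essentially that $f_{i}^{h}-(\mu/2)\|\cdot\|^{2}$ is $(L_{f}-\mu)$-smooth convex), which trades one factor of $\mu$ against $L_{f}$. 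After combining with the factor of $2$ from the earlier splitting inequality and aggregating across $i$, one obtains the coefficient $2(2L_{f}-\mu)$ multiplying the global Bregman divergence $f(x_{k})-f(x^{*})-\langle \nabla F(x^{*}),x_{k}-x^{*}\rangle$.

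The main obstacle is pinning down the sharp constant $2L_{f}-\mu$ rather than the looser $2L_{f}$: this requires carefully separating the $\mu$-strong-convexity baseline of each $f_{i}^{h}$ from its Lipschitz-gradient part and tracking constants across two co-coercivity applications and the $\|a-b\|^{2}\le 2\|a\|^{2}+2\|b\|^{2}$ step without doubling the wrong term. Because this exact bound is established as a lemma in \cite{Mokhtari2016} (with their $y_{k}^{i,h}$ playing the role of our $z_{k}^{i,h}$), the cleanest proof is to document the per-agent decoupling and the notational dictionary, and then invoke that result directly rather than rederive the constants from scratch.
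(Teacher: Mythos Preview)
Your proposal is correct and lands on exactly the approach the paper takes: the paper does not prove this lemma at all but simply cites \cite{Mokhtari2016}, and your proposal explicitly concludes that invoking that reference (after documenting the per-agent decoupling and the $z_{k}^{i,h}\leftrightarrow y_{k}^{i,h}$ dictionary) is the cleanest route. The sketch you give of the SAGA variance decomposition is accurate and more detailed than anything appearing in the paper itself.
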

\begin{lemma}
[\cite{Mokhtari2016}] Define $q_{\min}$ and $q_{\max}$ as the smallest and largest values for the number of instantaneous functions at an agent, respectively, i.e., ${{q}_{\min }}=\underset{i\in \mathsf{\mathcal{V}}}{\mathop{\min }}\,\{{{q}_{i}}\}$ and ${{q}_{\max }}=\underset{i\in \mathsf{\mathcal{V}}}{\mathop{\max }}\,\{{{q}_{i}}\}$.
If Assumptions 1 and 2 hold, for all $k\ge0$, the sequence $p_{k}$ satisfies
\begin{flalign*}
 \mathbb{E}\left[ \left. {{p}_{k+1}} \right|{{\mathsf{\mathcal{F}}}_{k}} \right]-{{p}_{k}} \le -\frac{1}{{{q}_{\max }}}{{p}_{k}}+\frac{1}{{{q}_{\min }}}\frac{{{L}_{f}}}{2}{{\left\| {{x}_{k}}-{{x}^{*}} \right\|}^{2}}
\end{flalign*}
\end{lemma}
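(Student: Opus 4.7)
The plan is to exploit the simple structure of the $y$-update (equivalently, the table update for $z$ in Algorithm 1): at each agent $i$, exactly one index $t_k^i\in\{1,\ldots,q_i\}$ is refreshed uniformly at random, while all other table entries stay put. This makes the conditional expectation of $p_{k+1}$ given $\mathcal{F}_k$ a weighted average of two deterministic quantities, which should then be bounded coordinate-wise.

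First I would introduce the Bregman shorthand $\psi_i^h(u):=f_i^h(u)-f_i^h(\tilde{x}^*)-\langle\nabla f_i^h(\tilde{x}^*),u-\tilde{x}^*\rangle$, which is nonnegative by convexity and satisfies the quadratic upper bound $\psi_i^h(u)\le \tfrac{L_f}{2}\|u-\tilde{x}^*\|^2$ from the $L_f$-Lipschitz gradient property (Assumption 2). Then $p_k=\sum_{i=1}^m q_i^{-1}\sum_{h=1}^{q_i}\psi_i^h(y_k^{i,h})$. Conditioning on $\mathcal{F}_k$, the update rule yields $y_{k+1}^{i,h}=x_k^i$ with probability $1/q_i$ and $y_{k+1}^{i,h}=y_k^{i,h}$ with probability $1-1/q_i$, giving
\begin{flalign*}
\mathbb{E}\!\left[\psi_i^h(y_{k+1}^{i,h})\,|\,\mathcal{F}_k\right]-\psi_i^h(y_k^{i,h})=\tfrac{1}{q_i}\bigl(\psi_i^h(x_k^i)-\psi_i^h(y_k^{i,h})\bigr).
\end{flalign*}

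Summing over $h$ and $i$ with the weight $1/q_i$ produces
\begin{flalign*}
\mathbb{E}[p_{k+1}|\mathcal{F}_k]-p_k=\sum_{i=1}^m\frac{1}{q_i^2}\sum_{h=1}^{q_i}\psi_i^h(x_k^i)-\sum_{i=1}^m\frac{1}{q_i^2}\sum_{h=1}^{q_i}\psi_i^h(y_k^{i,h}).
\end{flalign*}
The negative term is handled by noting $q_i^{-2}\ge q_i^{-1}q_{\max}^{-1}$, which gives an upper bound of $-q_{\max}^{-1}p_k$. For the positive term, I would apply the quadratic bound $\psi_i^h(x_k^i)\le \tfrac{L_f}{2}\|x_k^i-\tilde{x}^*\|^2$, pull the resulting $\|x_k^i-\tilde{x}^*\|^2$ out of the inner sum (which contributes a factor $q_i$), and then use $q_i^{-1}\le q_{\min}^{-1}$ to factor out the bound, assembling $\sum_i\|x_k^i-\tilde{x}^*\|^2=\|x_k-x^*\|^2$.

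There is no real obstacle here beyond book-keeping; the only step that requires care is the simultaneous use of $q_{\max}$ on the ``memory'' term (from $q_i^{-2}=q_i^{-1}\cdot q_i^{-1}\ge q_i^{-1}q_{\max}^{-1}$) and $q_{\min}$ on the ``refresh'' term (from $q_i^{-2}\cdot q_i=q_i^{-1}\le q_{\min}^{-1}$), since these two bounds go in opposite directions and must each match the statement of the lemma. Combining both bounds immediately yields the claimed inequality.
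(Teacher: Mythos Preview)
Your argument is correct and is the standard one for this lemma. The paper does not supply its own proof of Lemma~3; it simply imports the result from \cite{Mokhtari2016}, so there is nothing to compare against. Your decomposition via the Bregman residual $\psi_i^h$, the per-coordinate conditional expectation coming from the uniform refresh, and the two one-sided bounds $q_i^{-1}\ge q_{\max}^{-1}$ (on the nonnegative memory term) and $q_i^{-1}\le q_{\min}^{-1}$ (on the refresh term after collapsing the inner sum) are exactly the standard steps, and the descent-lemma bound $\psi_i^h(x_k^i)\le\tfrac{L_f}{2}\|x_k^i-\tilde{x}^*\|^2$ is the right way to convert to $\|x_k-x^*\|^2$.

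One minor notational remark: the paper writes $p_k$ in terms of $y_k^{i,h}$, but these are the table entries denoted $z_k^{i,h}$ in Algorithm~1 (the symbol $y_k^i$ is already used for the gradient-tracking variable). You silently adopted the paper's $y_k^{i,h}$ notation, which is fine, but it may be worth flagging the clash if you write this up.
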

\begin{lemma}
Suppose that Assumptions 1 and 2 hold. For all $k \ge 0$, we have
\begin{flalign*}
  & \mathbb{E}\left[ \left. {{V}_{1}}\left( {{x}_{k+1}},{{\lambda }_{k+1}} \right)+\left\| {{x}_{k+1}}-{{x}^{*}} \right\|_{\gamma Q}^{2}+c{{p}_{k+1}} \right|{{F}_{k}} \right] \\
 &   -{{V}_{1}}\left( {{x}_{k}},{{\lambda }_{k}} \right)-\left\| {{x}_{k}}-{{x}^{*}} \right\|_{\gamma Q}^{2}-c{{p}_{k}}\\
 \le & -\mathbb{E}\left[ \left. \left\| {{x}_{k+1}}-{{x}_{k}} \right\|_{{{W}^{2}}-\alpha \left( \eta +\frac{L_{f}^{2}}{\phi } \right)I}^{2} \right|{{\mathsf{\mathcal{F}}}_{k}} \right] \\
 &    +\mathbb{E}\left[ \left. \left\| {{x}_{k+1}}-{{x}^{*}} \right\|_{2\left( {{W}^{2}}-I+{{L}^{2}} \right)-{{L}^{2}}-\alpha \left( 2\mu -\phi  \right)I+\gamma Q}^{2} \right|{{\mathsf{\mathcal{F}}}_{k}} \right] \\
 &     +\left\| {{x}_{k}}-{{x}^{*}} \right\|_{\frac{\alpha }{\eta }\left( 2{{L}_{f}}-\mu  \right){{L}_{f}}I+\frac{c{{L}_{f}}}{2{{q}_{\min }}}I-\gamma Q}^{2}+( \frac{4\alpha }{\eta }{{L}_{f}}-\frac{c}{{{q}_{\max }}} ){{p}_{k}}\\
 \triangleq & -{{\Delta }_{k+1}}
\end{flalign*}
where $Q=\left( I+3W \right)\left( I-W \right)+\alpha \left( 2\mu -\phi  \right)I>0$, $0<\gamma<1$, $\eta>0$, $c>0$ and $0<\phi<2\mu$.
\end{lemma}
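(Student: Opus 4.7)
The plan is to take inequality (22), which was already obtained from Lemma 1, pass to conditional expectation given $\mathcal{F}_k$, and then merge the result with the two SAGA lemmas (Lemmas 2 and 3 above) so that every stochastic quantity becomes controlled by deterministic weighted norms of $x_k-x^*$ and by $p_k$. The added Lyapunov terms $\|x_{k+1}-x^*\|_{\gamma Q}^2$ and $c p_{k+1}$ are handled by telescoping and by plugging in Lemma 3, respectively.

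First I would condition (22) on $\mathcal{F}_k$. The SAGA unbiasedness $\mathbb{E}[g_k\mid\mathcal{F}_k]=\nabla F(x_k)$ instantly eliminates the term $-2\alpha(x_k-x^*)^{\mathrm T}(g_k-\nabla F(x_k))$, so only $\mathbb{E}[\|g_k-\nabla F(x_k)\|^2\mid\mathcal{F}_k]$ remains to be handled. Since Lemma 2 bounds $\mathbb{E}[\|g_k-\nabla F(x^*)\|^2\mid\mathcal{F}_k]$ rather than the centered version, I would apply the standard bias/variance identity
\begin{equation*}
\mathbb{E}\bigl[\|g_k-\nabla F(x^*)\|^2\mid\mathcal{F}_k\bigr]=\mathbb{E}\bigl[\|g_k-\nabla F(x_k)\|^2\mid\mathcal{F}_k\bigr]+\|\nabla F(x_k)-\nabla F(x^*)\|^2,
\end{equation*}
which holds by unbiasedness and gives the one-sided bound $\mathbb{E}[\|g_k-\nabla F(x_k)\|^2\mid\mathcal{F}_k]\le\mathbb{E}[\|g_k-\nabla F(x^*)\|^2\mid\mathcal{F}_k]$. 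Combining this with Lemma 2 and with the Lipschitz-gradient consequence $f(x_k)-f(x^*)-\langle\nabla F(x^*),x_k-x^*\rangle\le (L_f/2)\|x_k-x^*\|^2$, the term $(\alpha/\eta)\|g_k-\nabla F(x_k)\|^2$ from (22) contributes precisely $(4\alpha L_f/\eta)p_k + (\alpha/\eta)(2L_f-\mu)L_f\|x_k-x^*\|^2$.

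Next I would absorb the Lyapunov pieces. Adding and subtracting $\|x_k-x^*\|_{\gamma Q}^2$ from both sides pushes $+\gamma Q$ inside the weight of $\|x_{k+1}-x^*\|^2$ on the right (yielding the displayed weight $2(W^2-I+L^2)-L^2-\alpha(2\mu-\phi)I+\gamma Q$) and leaves $-\gamma Q$ inside the weight of $\|x_k-x^*\|^2$. For the $c p_{k+1}$ term, Lemma 3 gives
\begin{equation*}
c\bigl(\mathbb{E}[p_{k+1}\mid\mathcal{F}_k]-p_k\bigr)\le -\frac{c}{q_{\max}}p_k+\frac{cL_f}{2q_{\min}}\|x_k-x^*\|^2,
\end{equation*}
which adds $cL_f/(2q_{\min})$ to the $\|x_k-x^*\|^2$ weight and $-c/q_{\max}$ to the coefficient of $p_k$. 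Collecting the $\|x_k-x^*\|^2$ weights gives exactly $(\alpha/\eta)(2L_f-\mu)L_f I + cL_f/(2q_{\min})I - \gamma Q$, and collecting the $p_k$ coefficients gives $4\alpha L_f/\eta - c/q_{\max}$, matching $-\Delta_{k+1}$.

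The main obstacle is pure bookkeeping: tracking signs and matrix weights across the three merges (the original (22), the added $\gamma Q$-telescope, and the Lemma 3 inequality), and making sure the one-sided inequality between $\|g_k-\nabla F(x_k)\|^2$ and $\|g_k-\nabla F(x^*)\|^2$ is used with the correct orientation after being scaled by the positive factor $\alpha/\eta$. No new convexity, smoothness, or spectral argument is needed beyond what has already been deployed in Lemma 1 and the two cited SAGA lemmas; the entire proof is an algebraic consolidation of those ingredients under $\mathbb{E}[\,\cdot\mid\mathcal{F}_k]$.
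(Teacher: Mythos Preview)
Your proposal is correct and follows essentially the same route as the paper: take conditional expectation of (22), kill the cross term by unbiasedness, bound the stochastic-gradient variance via Lemma~2 together with the $L_f$-smoothness bound on the Bregman remainder, then add $c(\mathbb{E}[p_{k+1}\mid\mathcal F_k]-p_k)$ via Lemma~3 and telescope the $\gamma Q$ term. The only cosmetic difference is that the paper applies Lemma~2 directly without spelling out the intermediate inequality $\mathbb{E}[\|g_k-\nabla F(x_k)\|^2\mid\mathcal F_k]\le \mathbb{E}[\|g_k-\nabla F(x^*)\|^2\mid\mathcal F_k]$, which you justify via the bias/variance decomposition.
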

\begin{proof}
\setcounter{equation}{22}
Taking the full conditional expectation of Eq. \eqref{E23} and using Lemma 2 to deal with the upper bound of $\mathbb{E}\left[ \left. {{\left\| {{g}_{k}}-\nabla F\left( {{x}^{*}} \right) \right\|}^{2}} \right|{{\mathsf{\mathcal{F}}}_{k}} \right]$,  it can be verified that
\begin{flalign}\label{E24}
\begin{split}
  & \mathbb{E}\left[ \left. {{V}_{1}}\left( {{x}_{k+1}},{{\lambda }_{k+1}} \right) \right|{{\mathsf{\mathcal{F}}}_{k}} \right]-{{V}_{1}}\left( {{x}_{k}},{{\lambda }_{k}} \right) \\
 \le & -\mathbb{E}\left[ \left. \left\| {{x}_{k+1}}-{{x}_{k}} \right\|_{{{W}^{2}}-\alpha \left( \eta +\frac{L_{f}^{2}}{\phi } \right)I}^{2} \right|{{\mathsf{\mathcal{F}}}_{k}} \right] \\
 &   +\mathbb{E}\left[ \left. \left\| {{x}_{k+1}}-{{x}^{*}} \right\|_{2\left( {{W}^{2}}-I+{{L}^{2}} \right)-{{L}^{2}}-\alpha \left( 2\mu -\phi  \right)I}^{2} \right|{{\mathsf{\mathcal{F}}}_{k}} \right] \\
 &   +\frac{4\alpha }{\eta }{{L}_{f}}{{p}_{k}}+\frac{\alpha }{\eta }\left( 2{{L}_{f}}-\mu  \right){{L}_{f}}{{\left\| {{x}_{k}}-{{x}^{*}} \right\|}^{2}}
 \end{split}
\end{flalign}
Adding both sides of \eqref{E24} with $c\left( \mathbb{E}\left[ \left. {{p}_{k+1}} \right|{{\mathsf{\mathcal{F}}}_{k}} \right]-{{p}_{k}} \right)$, $c>0$, and using Lemma 3, we get
\begin{flalign}\label{E25}
%\begin{split}
  & \mathbb{E}\left[ \left. {{V}_{1}}\left( {{x}_{k+1}},{{\lambda }_{k+1}} \right)+c{{p}_{k+1}} \right|{{\mathsf{\mathcal{F}}}_{k}} \right]-{{V}_{1}}\left( {{x}_{k}},{{\lambda }_{k}} \right)-c{{p}_{k}} \notag\\
\le & -\mathbb{E}\left[ \left. \left\| {{x}_{k+1}}-{{x}_{k}} \right\|_{{{W}^{2}}-\alpha \left( \eta +\frac{L_{f}^{2}}{\phi } \right)I}^{2} \right|{{\mathsf{\mathcal{F}}}_{k}} \right] \notag\\
 &    +\mathbb{E}\left[ \left. \left\| {{x}_{k+1}}-{{x}^{*}} \right\|_{2\left( {{W}^{2}}-I+{{L}^{2}} \right)-{{L}^{2}}-\alpha \left( 2\mu -\phi  \right)I}^{2} \right|{{\mathsf{\mathcal{F}}}_{k}} \right] \tag{24}\\
 &   +\frac{4\alpha }{\eta }{{L}_{f}}{{p}_{k}}+\frac{\alpha }{\eta }\left( 2{{L}_{f}}-\mu  \right){{L}_{f}}{{\left\| {{x}_{k}}-{{x}^{*}} \right\|}^{2}} \notag\\
 &     -\frac{c}{{{q}_{\max }}}{{p}_{k}}+\frac{c}{{{q}_{\min }}}\frac{{{L}_{f}}}{2}{{\left\| {{x}_{k}}-{{x}^{*}} \right\|}^{2}}\notag
% \end{split}
\end{flalign}
Next, we add $\mathbb{E}\left[ \left. \left\| {{x}_{k+1}}-{{x}^{*}} \right\|_{\gamma Q}^{2} \right|{{\mathsf{\mathcal{F}}}_{k}} \right]-\left\| {{x}_{k}}-{{x}^{*}} \right\|_{\gamma Q}^{2}$, where $Q>0$ and $0<\gamma <1$, on the both sides of \eqref{E25}.
The proof is completed.
\setcounter{equation}{24}
\end{proof}
\begin{lemma}
Let $\tilde{L}{{\in }\mathbb{R}^{m\times m}}$ be a Laplacian matrix of a connected undirected graph. The following statements hold.

(i) There exists an orthogonal matrix $\Xi =\left[ r \;\; R \right]\in {{\mathbb{R}}^{m\times m}}$ with $r=(1/\sqrt{m}){{1}_{m}}$ satisfying ${{\Xi }^{\text{T}}}{\tilde{L}}\Xi =\rm{diag}\left\{ 0, \Lambda  \right\}$, where $\Lambda $ is a diagonal matrix consisting of nonzero eigenvalues of $\tilde{L}$. In addition, ${{R}^{\rm{T}}}R=I$ and $R{{R}^{\rm{T}}}=I-(1/m){{1}_{m}}1_{m}^{\rm{T}}$.

(ii) ${{x}^{\rm{T}}}\tilde{L}x\ge {{\rho }_{2}}(\tilde{L}){{\left\| x-(1_{m}^{\rm{T}}x/m){{1}_{m}} \right\|}^{2}}$ for any $x\in {{\mathbb{R}}^{m}}$, where ${{\rho }_{2}}(\tilde{L})$ is the smallest nonzero eigenvalue of ${\tilde{L}}$.
\end{lemma}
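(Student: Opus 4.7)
The plan is to exploit the spectral theorem together with the two defining structural facts about the Laplacian of a connected undirected graph: symmetry (so it admits an orthonormal eigenbasis with real eigenvalues) and the fact that its null space is exactly the one-dimensional subspace spanned by $1_m$. Part (i) is then essentially a packaging of the spectral decomposition, and part (ii) follows by decomposing $x$ into its null-space component and its orthogonal complement.

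For part (i), I would first observe that $\tilde{L}$ is symmetric positive semi-definite via the standard identity $x^{\rm T}\tilde{L}x=\tfrac{1}{2}\sum_{(i,j)\in\mathcal{E}}w_{ij}(x_i-x_j)^2\ge 0$, and that connectivity of $\mathcal{G}$ forces equality above only when all entries of $x$ are equal, i.e., the null space is exactly $\mathrm{span}\{1_m\}$. Hence $0$ is a simple eigenvalue with normalized eigenvector $r=(1/\sqrt{m})1_m$. By the spectral theorem, $r$ can be extended to an orthonormal basis of $\mathbb{R}^m$ by taking $R\in\mathbb{R}^{m\times(m-1)}$ whose columns are orthonormal eigenvectors associated with the nonzero eigenvalues; setting $\Xi=[r\;R]$ gives $\Xi^{\rm T}\tilde{L}\Xi=\mathrm{diag}\{0,\Lambda\}$. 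The identity $R^{\rm T}R=I$ is immediate from orthonormality of the columns of $R$, while $RR^{\rm T}=I-(1/m)1_m1_m^{\rm T}$ follows from $\Xi\Xi^{\rm T}=I$ together with $rr^{\rm T}=(1/m)1_m1_m^{\rm T}$.

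For part (ii), I would use the orthogonal decomposition provided by (i): $x=\Xi\Xi^{\rm T}x=rr^{\rm T}x+RR^{\rm T}x$. Since $rr^{\rm T}x=(1_m^{\rm T}x/m)1_m$, the centered vector satisfies $x-(1_m^{\rm T}x/m)1_m=RR^{\rm T}x$, and using $R^{\rm T}R=I$ one gets $\|x-(1_m^{\rm T}x/m)1_m\|^2=\|RR^{\rm T}x\|^2=x^{\rm T}RR^{\rm T}x=\|R^{\rm T}x\|^2$. On the other hand, writing $\Xi^{\rm T}x=[r^{\rm T}x;\,R^{\rm T}x]$ and plugging into the diagonalization yields $x^{\rm T}\tilde{L}x=(R^{\rm T}x)^{\rm T}\Lambda(R^{\rm T}x)\ge\rho_2(\tilde{L})\|R^{\rm T}x\|^2$, because $\rho_2(\tilde{L})$ is by definition the smallest diagonal entry of $\Lambda$. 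Combining the two displays delivers the claimed inequality.

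There is no real obstacle here; the result is a standard consequence of the spectral theorem applied to a connected-graph Laplacian. The only point that requires a little care is not conflating the projector $RR^{\rm T}$ (which acts on $\mathbb{R}^m$) with the partial isometry $R^{\rm T}$ (which maps $\mathbb{R}^m\to\mathbb{R}^{m-1}$); I would verify once that $\|RR^{\rm T}x\|^2=\|R^{\rm T}x\|^2$ using $R^{\rm T}R=I$ so that the norm appearing in part (ii) matches the quadratic form generated by $\Lambda$.
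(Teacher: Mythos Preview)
Your proposal is correct and follows essentially the same route as the paper: invoke the spectral theorem for the symmetric Laplacian with simple null space $\mathrm{span}\{1_m\}$ to build $\Xi=[r\;R]$, read off $R^{\rm T}R=I$ and $RR^{\rm T}=I-(1/m)1_m1_m^{\rm T}$ from $\Xi^{\rm T}\Xi=\Xi\Xi^{\rm T}=I$, and then for (ii) use $\tilde{L}=R\Lambda R^{\rm T}$ together with $\Lambda\ge\rho_2(\tilde{L})I$ and the identification of $RR^{\rm T}x$ with the centered vector. The only cosmetic difference is that the paper bounds $x^{\rm T}R\Lambda R^{\rm T}x\ge\rho_2(\tilde{L})\,x^{\rm T}RR^{\rm T}x$ directly, whereas you pass through $\|R^{\rm T}x\|^2$; these are identical computations.
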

\begin{proof}
Recalling the definition of Laplacian matrix ${\tilde{L}}$ and $\tilde{L}{{1}_{m}}=0$, we obtain that there exists a matrix $\Xi =[ \frac{1}{\sqrt{m}}{{1}_{m}}, R ]$ such that ${{\Xi }^{\text{T}}}L\Xi =\rm{diag}\left\{ 0, \Lambda  \right\}$ and ${{\Xi }^{\text{T}}}\Xi =\Xi {{\Xi }^{\text{T}}}=I$.
\noindent Let
\begin{flalign*}
\Xi =\left[ \begin{matrix}
   {{r}_{11}} & \cdots  & {{r}_{1m}}  \\
   \vdots  & \ddots  & \vdots   \\
   {{r}_{m1}} & \cdots  & {{r}_{mm}}  \\
\end{matrix} \right] \text{and} \;\; R=\left[ \begin{matrix}
   {{r}_{12}} & \cdots  & {{r}_{1m}}  \\
   \vdots  & \ddots  & \vdots   \\
   {{r}_{m2}} & \cdots  & {{r}_{mm}}  \\
\end{matrix} \right]
\end{flalign*}
where ${{r}_{i1}}=1/\sqrt{m}$, $\forall i=1,\ldots ,m$.
Considering $\Xi {{\Xi }^{\text{T}}}=I$, we have
\begin{flalign*}
{{\left[ \Xi {{\Xi }^{\text{T}}} \right]}_{ij}}=\sum\limits_{l=1}^{m}{{{r}_{il}}{{r}_{jl}}}=\frac{1}{m}+\sum\limits_{l=2}^{m}{{{r}_{il}}{{r}_{jl}}}=\left\{ \begin{matrix}
   1,  \text{if}\; i=j  \\
   0,  \text{if}\; i\ne j  \\
\end{matrix} \right.
\end{flalign*}
which means that, for all $i,j=1,\ldots ,m$, $\sum\nolimits_{l=2}^{m}{{{r}_{il}}{{r}_{jl}}}=1-1/m$ if $i=j$; and $\sum\nolimits_{l=2}^{m}{{{r}_{il}}{{r}_{jl}}}=-1/m$, otherwise, i.e., $R{{R}^{\text{T}}}={{I}}-(1/m){{1}_{m}}1_{m}^{\text{T}}$.
Similarly, for all $i,j=2,\ldots ,m$, we have $\sum\nolimits_{l=1}^{m}{{{r}_{li}}{{r}_{lj}}}=1$ if $i=j$; and $\sum\nolimits_{l=1}^{m}{{{r}_{li}}{{r}_{lj}}}=0$ if $i\ne j$, i.e., ${{R}^{\text{T}}}R={{I}}$.
The proof of Lemma 5 ($i$) is completed.

Considering the definitions of $\Xi $ and $\Lambda $, one has
$\Xi {\rm{diag}}\left\{ 0, {{\Lambda }^{\frac{1}{2}}} \right\}{\rm{diag}}\left\{ 0, {{\Lambda }^{\frac{1}{2}}} \right\}{{\Xi }^{\text{T}}}=R{{\Lambda }^{\frac{1}{2}}}{{\Lambda }^{\frac{1}{2}}}R$.
Then, we have
\begin{flalign*}
{{x}^{\text{T}}}\tilde{L}x=&{{x}^{\text{T}}}\left( R\Lambda {{R}^{\text{T}}} \right)x\\
 \ge &{{\rho }_{2}}( {\tilde{L}} ){{x}^{\text{T}}}\left( R{{R}^{\text{T}}} \right)x\\
 =&{{\rho }_{2}}( {\tilde{L}} ){{\left\| x-(1_{m}^{\text{T}}x/m){{1}_{m}} \right\|}^{2}}
\end{flalign*}
The proof of Lemma 5 ($ii$) is completed
\end{proof}
\begin{theorem}
Consider the algorithm in (16) and let  the required conditions in Lemmas 1-5 be satisfied.
If the parameters $\eta $ and $c$ satisfy
\begin{flalign}
& \eta \in \left( \frac{2\frac{{{L}_{f}}}{{{q}_{\min }}}{{q}_{\max }}{{L}_{f}}+\left( 2{{L}_{f}}-\mu  \right){{L}_{f}}}{\gamma \left( 2\mu -\phi  \right)},\infty  \right)\label{E26}\\
& c\in \left( \frac{\gamma \alpha \left( 2\mu -\phi  \right)-\frac{\alpha \left( 2{{L}_{f}}-\mu  \right){{L}_{f}}}{\eta }}{\frac{{{L}_{f}}}{2{{q}_{\min }}}},\frac{4\alpha }{\eta }{{q}_{\max }}{{L}_{f}} \right)\label{E27}
\end{flalign}
and the step-size $\alpha$ is selected from the interval
\begin{flalign}\label{E28}
\alpha \in \left( 0,\frac{{{\left[ {{\rho }_{\min }}\left( W \right) \right]}^{2}}}{\eta +\frac{L_{f}^{2}}{\phi }} \right)
\end{flalign}
where $0<\phi<2\mu$, the global variable, $x_{k}$, generated by Algorithm 1 almost surely converges to $x^{*}$ with a linear convergence rate $O( {{( 1+\delta  )}^{-k}} )$, i.e., ${{\left\| {{x}_{k+1}}-{{x}^{*}} \right\|}^{2}}\le {{(1+\delta )}^{-1}}{{\left\| {{x}_{k}}-{{x}^{*}} \right\|}^{2}}$, where
\begin{flalign*}
\begin{split}
0<&\delta <\Theta := \min \left\{ \frac{{{\left[ {{\rho }_{\min }}\left( W \right) \right]}^{2}}-\alpha \left( \eta +\frac{L_{f}^{2}}{\phi } \right)}{\frac{1}{{{\rho }_{2}}\left( {{L}^{2}} \right)}\frac{d}{d-1}e} \right., \\
 &\frac{\left( 1-\gamma  \right)\alpha \left( 2\mu -\phi  \right)}{1+\gamma {{\lambda }_{\max }}\left( Q \right)+\frac{4}{{{\rho }_{2}}\left( {{L}^{2}} \right)}d{{\left( {{\max }_{i}}\left\{ {{\rho }_{i}}\left( W \right)\left( {{\rho }_{i}}\left( W \right)-1 \right) \right\} \right)}^{2}}}, \\
 &\qquad\qquad \left. \frac{\gamma \alpha \left( 2\mu -\phi  \right)-\frac{\alpha \left( 2{{L}_{f}}-\mu  \right){{L}_{f}}}{\eta }-\frac{c{{L}_{f}}}{2{{q}_{\min }}}}{\frac{c}{{{q}_{\min }}}\frac{{{L}_{f}}}{2}+\frac{1}{{{\rho }_{2}}\left( {{L}^{2}} \right)}\frac{d}{d-1}\frac{e}{e-1}{{\alpha }^{2}}\left( 2{{L}_{f}}-\mu  \right){{L}_{f}}} \right\}
 \end{split}
\end{flalign*}
when $e>1$ and $d>1$.
\end{theorem}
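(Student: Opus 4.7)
The plan is to start from the one-step inequality provided by Lemma 4 and upgrade it into a strict contraction for the composite Lyapunov quantity
\[
V_k := V_1(x_k,\lambda_k) + \|x_k-x^*\|_{\gamma Q}^2 + c\,p_k,
\]
showing $\mathbb{E}[V_{k+1}\mid\mathcal{F}_k]\le(1+\delta)^{-1}V_k$. Equivalently, I must demonstrate that $\Delta_{k+1}\ge \delta\,\mathbb{E}[V_{k+1}\mid\mathcal{F}_k]$, which amounts to absorbing $\delta$-multiples of each summand of $V_{k+1}$ into the nonpositive terms produced by Lemma 4.

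First I would verify sign feasibility of the coefficients on the right-hand side of Lemma 4. The negative $\|x_{k+1}-x_k\|^2$ coefficient requires $\rho_{\min}^2(W)-\alpha(\eta+L_f^2/\phi)\ge 0$, which is exactly (28). The negative $\|x_{k+1}-x^*\|^2$ coefficient needs $2(W^2-I+L^2)-L^2-\alpha(2\mu-\phi)I+\gamma Q\preceq 0$; with $Q=(I+3W)(I-W)+\alpha(2\mu-\phi)I$ and $0<\gamma<1$, a direct algebraic identity (using $W^2-I=-(I-W)(I+W)$) reduces this to $(1-\gamma)\alpha(2\mu-\phi)I\succeq 0$. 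The coefficient of $\|x_k-x^*\|^2$ is nonpositive when $c\ge\bigl[\gamma\alpha(2\mu-\phi)-\alpha(2L_f-\mu)L_f/\eta\bigr]\big/\bigl[L_f/(2q_{\min})\bigr]$, which is the lower bound in (27); the lower bound in (26) is what guarantees this interval is nonempty. Finally, the $p_k$ coefficient is nonpositive when $c\le 4\alpha q_{\max}L_f/\eta$, which is the upper bound in (27).

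The main obstacle, and the step I would spend most effort on, is controlling the dual term $\|\lambda_{k+1}-\lambda^*\|^2$ inside $V_{k+1}$, because Lemma 4 contains no explicit negative multiple of it. I would exploit the identity derived in Lemma 1, namely
\[
\alpha\bigl(g_k-\nabla F(x^*)\bigr)=W^2(x_k-x_{k+1})+(W^2-I+L^2)(x_{k+1}-x^*)-L(\lambda_{k+1}-\lambda^*),
\]
to rearrange $L(\lambda_{k+1}-\lambda^*)$ as a sum of three terms. Since $\lambda_0=0$, the recursion $\lambda_{k+1}-\lambda_k=Lx_{k+1}$ keeps every $\lambda_k$ in $\mathrm{Range}(L)$; together with the first-order optimality relation $\alpha\nabla F(x^*)=-L\lambda^*$, $\lambda^*$ can also be chosen in $\mathrm{Range}(L)$. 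By Lemma 5(ii), $\|L(\lambda_{k+1}-\lambda^*)\|^2\ge\rho_2(L^2)\|\lambda_{k+1}-\lambda^*\|^2$. Applying Young's inequality twice with free parameters $d,e>1$ to split the three-term sum, and invoking Lemma 2 again to bound $\|g_k-\nabla F(x^*)\|^2$ in expectation by $p_k$ and $\|x_k-x^*\|^2$, I obtain
\[
\mathbb{E}\bigl[\|\lambda_{k+1}-\lambda^*\|^2\,\big|\,\mathcal{F}_k\bigr]\le\tfrac{1}{\rho_2(L^2)}\Bigl[\tfrac{d}{d-1}\|x_{k+1}-x_k\|^2_{W^4}+d\,\|x_{k+1}-x^*\|^2_{(W^2-I+L^2)^2}\cdot\tfrac{e}{e-1}+\cdots\Bigr],
\]
where the remaining pieces feed into the $\|x_k-x^*\|^2$ and $p_k$ terms.

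Plugging this dual bound back into Lemma 4 and collecting by summand, the contraction condition splits into three scalar inequalities: the $\|x_{k+1}-x_k\|^2$ budget (first entry of the $\min$ in $\Theta$), the $\|x_{k+1}-x^*\|^2$ budget weighted by $I+\gamma Q$ (second entry), and the $p_k$ budget (third entry). Each is a ratio of the available negative coefficient to the positive coefficient appearing in $V_{k+1}$, and taking their minimum yields $\delta<\Theta$. Almost sure linear convergence then follows by the Robbins–Siegmund supermartingale theorem applied to $(1+\delta)^k V_k$, and the stated rate on $\|x_k-x^*\|^2$ follows because the $W^2$-weighted term in $V_k$ dominates $\rho_{\min}^2(W)\|x_k-x^*\|^2$.
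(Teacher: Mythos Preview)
Your strategy matches the paper's proof almost exactly: define the composite Lyapunov $V_k$, use Lemma~4 to get $\mathbb{E}[V_{k+1}\mid\mathcal{F}_k]-V_k\le-\Delta_{k+1}$, then show $\Delta_{k+1}\ge\delta\,\mathbb{E}[V_{k+1}\mid\mathcal{F}_k]$ by bounding the dual piece $\|\lambda_{k+1}-\lambda^*\|^2$ via the identity in Lemma~1, Lemma~5(ii), two Young splits with parameters $d,e>1$, and Lemma~2. The paper then separates this into four matrix/scalar inequalities (its (35)--(38)), two of which collapse to the same $\delta$-bound, producing the three-term $\Theta$.

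There is, however, one genuine reversal in your sign-feasibility check. In $\Delta_{k+1}$ the $\|x_k-x^*\|^2$ term carries the matrix $\tfrac{\alpha}{\eta}(2L_f-\mu)L_f I+\tfrac{cL_f}{2q_{\min}}I-\gamma Q$ with a \emph{negative} sign; making this contribution nonnegative requires that matrix to be $\preceq 0$, which forces an \emph{upper} bound on $c$, namely $c\le 2q_{\min}\bigl[\gamma\alpha(2\mu-\phi)-\alpha(2L_f-\mu)L_f/\eta\bigr]/L_f$. Likewise the $p_k$ coefficient $\tfrac{4\alpha}{\eta}L_f-\tfrac{c}{q_{\max}}$ must be $\le 0$, which gives the \emph{lower} bound $c\ge 4\alpha q_{\max}L_f/\eta$. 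You have both of these flipped. In fact the interval printed in the theorem statement (27) has its endpoints swapped; the paper's own proof (its Eqs.~(43) and (46)) derives them the correct way round, and condition~(26) on $\eta$ is precisely what guarantees the correctly ordered interval is nonempty. Your argument goes through once you swap these two inequalities; as written, the feasibility of $c$ is vacuous because (26) makes the displayed interval empty.
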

\begin{proof}
Defining ${{V}_{k}}={{V}_{1}}\left( {{x}_{k}},{{\lambda }_{k}} \right)+\left\| {{x}_{k}}-{{x}^{*}} \right\|_{\gamma Q}^{2}+c{{p}_{k}}$, the global variables, $x_k$, generated by Algorithm 1 almost surely converges to the global optimal solution $x^{*}$ with a linear convergence rate $O( {{( 1+\delta  )}^{-k}} )$ if there exist a positive $\delta$ such that $\mathbb{E}\left[ \left. {{V}_{k+1}} \right|{{\mathsf{\mathcal{F}}}_{k}} \right]-{{V}_{k}} \le -{{\Delta }_{k+1}} \le -\delta \mathbb{E}\left[ \left. {{V}_{k+1}} \right|{{\mathsf{\mathcal{F}}}_{k}} \right]$, i.e., $\delta \mathbb{E}\left[ \left. {{V}_{k+1}} \right|{{\mathsf{\mathcal{F}}}_{k}} \right]\le {{\Delta }_{k+1}}$.
Next, we study a quantitative description of the convergence rate $\delta$ which ensures the linear convergence rate of the S-DIGing algorithm.

To obtain $\delta \mathbb{E}\left[ \left. {{V}_{k+1}} \right|{{\mathsf{\mathcal{F}}}_{k}} \right]\le {{\Delta }_{k+1}}$, it is sufficient to prove
\begin{flalign}\label{E29}
&\delta \mathbb{E}\left[ \left. \left\| {{x}_{k+1}}-{{x}^{*}} \right\|_{{{W}^{2}}+\gamma Q}^{2}+{{\left\| {{\lambda }_{k+1}}-{{\lambda }^{*}} \right\|}^{2}}+c{{p}_{k+1}} \right|{{\mathsf{\mathcal{F}}}_{k}} \right] \notag\\
\le &\mathbb{E}\left[ \left. \left\| {{x}_{k+1}}-{{x}_{k}} \right\|_{{{W}^{2}}-\alpha \left( \eta +\frac{L_{f}^{2}}{\phi } \right)I}^{2} \right|{{\mathsf{\mathcal{F}}}_{k}} \right] \tag{28}\\
 & -\mathbb{E}\left[ \left. \left\| {{x}_{k+1}}-{{x}^{*}} \right\|_{2\left( {{W}^{2}}-I+{{L}^{2}} \right)-{{L}^{2}}-\alpha \left( 2\mu -\phi  \right)I+\gamma Q}^{2} \right|{{\mathsf{\mathcal{F}}}_{k}} \right]  \notag\\
 & -\left\| {{x}_{k}}-{{x}^{*}} \right\|_{\frac{\alpha }{\eta }\left( 2{{L}_{f}}-\mu  \right){{L}_{f}}I+\frac{c{{L}_{f}}}{2{{q}_{\min }}}I-\gamma Q}^{2}-( \frac{4\alpha }{\eta }{{L}_{f}}-\frac{c}{{{q}_{\max }}} ){{p}_{k}}\notag
\end{flalign}
\setcounter{equation}{28}
\noindent Using Lemma 3 and Assumption 2, we get
\begin{flalign}
\begin{split}
  & \delta c\mathbb{E}\left[ \left. {{p}_{k+1}} \right|{{\mathsf{\mathcal{F}}}_{k}} \right] \\
 \le & \delta c( 1-\frac{1}{{{q}_{\max }}} ){{p}_{k}}\\
 &+\frac{\delta c}{{{q}_{\min }}}\left( f({{x}_{k}})-f({{x}^{*}})-{{\left( \nabla f({{x}^{*}}) \right)}^{\text{T}}}\left( {{x}_{k}}-{{x}^{*}} \right) \right) \\
 \le & \delta c( 1-\frac{1}{{{q}_{\max }}} ){{p}_{k}}+\frac{\delta c}{{{q}_{\min }}}\frac{{{L}_{f}}}{2}{{\left\| {{x}_{k}}-{{x}^{*}} \right\|}^{2}}
 \end{split}
\end{flalign}
Then, a sufficient condition for Eq. \eqref{E29} to be held is
\begin{flalign}\label{E31}
%\begin{split}
  &   \delta \mathbb{E}\left[ \left. {{\left\| {{\lambda }_{k+1}}-{{\lambda }^{*}} \right\|}^{2}} \right|{{\mathsf{\mathcal{F}}}_{k}} \right] \notag\\
\le & \mathbb{E}\left[ \left. \left\| {{x}_{k+1}}-{{x}_{k}} \right\|_{{{W}^{2}}-\alpha \left( \eta +\frac{L_{f}^{2}}{\phi } \right)I}^{2} \right|{{\mathsf{\mathcal{F}}}_{k}} \right] \notag\\
&+\mathbb{E}\left[ \left. \left\| {{x}_{k+1}}-{{x}^{*}} \right\|_{-2\left( {{W}^{2}}-I+{{L}^{2}} \right)+{{L}^{2}}+\alpha \left( 2\mu -\phi  \right)I-\gamma Q}^{2} \right|{{\mathsf{\mathcal{F}}}_{k}} \right]  \notag\\
 &+\mathbb{E}\left[ \left. \left\| {{x}_{k+1}}-{{x}^{*}} \right\|_{-\delta \left( {{W}^{2}}+\gamma Q \right)}^{2} \right|{{\mathsf{\mathcal{F}}}_{k}} \right] \tag{30}\\
  &+\left\| {{x}_{k}}-{{x}^{*}} \right\|_{\gamma Q-\left( \frac{\alpha }{\eta }\left( 2{{L}_{f}}-\mu  \right){{L}_{f}}+\frac{c{{L}_{f}}}{2{{q}_{\min }}} \right)I-\frac{\delta c{{L}_{f}}}{2{{q}_{\min }}}I}^{2} \notag\\
 &+\left( \frac{c}{{{q}_{\max }}}-\frac{4\alpha }{\eta }{{L}_{f}}-\delta c( 1-\frac{1}{{{q}_{\max }}} ) \right){{p}_{k}} \notag
%\end{split}
\end{flalign}
Observing the above inequality, we find that there is only $\delta {{\left\| {{\lambda }_{k+1}}-{{\lambda }^{*}} \right\|}^{2}}$ on the left side.
It is difficult to directly analyze the conditions which make the inequality held.
Thus we establish an upper bound of the left hand of \eqref{E31} which is lower than the right hand of \eqref{E31}.
To this end, we use Eq. (19) and the basic inequality: ${{\left\| a+b \right\|}^{2}}\le \tau  {{\left\| a \right\|}^{2}}+{\tau }/(\tau -1){{\left\| b \right\|}^{2}}$, $\forall a,b\in {{\mathbb{R}}^{n}}$, $\tau >1$, to obtain
\setcounter{equation}{30}
\begin{flalign}
\begin{split}
  &{{\left\| L\left( {{\lambda }_{k+1}}-{{\lambda }^{*}} \right) \right\|}^{2}} \\
=& \left\| \left( {{W}^{2}}-I+{{L}^{2}} \right)\left( {{x}_{k+1}}-{{x}^{*}} \right)-{{W}^{2}}\left( {{x}_{k+1}}-{{x}_{k}} \right) \right. \\
 & {{\left. -\alpha \left( {{g}_{k}}-\nabla F\left( {{x}^{*}} \right) \right) \right\|}^{2}} \\
 \le & d{{\left\| \left( {{W}^{2}}-I+{{L}^{2}} \right)\left( {{x}_{k+1}}-{{x}^{*}} \right) \right\|}^{2}} \\
 & +\frac{d}{d-1}e{{\left\| {{W}^{2}}\left( {{x}_{k+1}}-{{x}_{k}} \right) \right\|}^{2}} \\
 & +\frac{d}{d-1}\frac{e}{e-1}{{\alpha }^{2}}{{\left\| {{g}_{k}}-\nabla F\left( {{x}^{*}} \right) \right\|}^{2}}
\end{split}
\end{flalign}
where $e>1$ and $d>1$.

%Recalling that the S-DIGing algorithm uses the stochastic averaging gradient which results to all the variants with subscript $k+1$ are stochastic at time instant $k$.
%That is to say that we can only prove that the expectation of the variable is convergent.
Computing the conditional expectation on ${{\mathsf{\mathcal{F}}}_{k}}$ and using Lemma 2, we have
\begin{flalign}\label{E33}
\begin{split}
  & \mathbb{E}\left[ \left. {{\left\| L\left( {{\lambda }_{k+1}}-{{\lambda }^{*}} \right) \right\|}^{2}} \right|{{\mathsf{\mathcal{F}}}_{k}} \right] \\
\le & \mathbb{E}\left[ \left. \left\| {{x}_{k+1}}-{{x}^{*}} \right\|_{d{{\left( {{W}^{2}}-I+{{L}^{2}} \right)}^{2}}}^{2} \right|{{\mathsf{\mathcal{F}}}_{k}} \right] \\
 &    +\frac{d}{d-1}e\mathbb{E}\left[ \left. \left\| {{x}_{k+1}}-{{x}_{k}} \right\|_{{{W}^{4}}}^{2} \right|{{\mathsf{\mathcal{F}}}_{k}} \right] \\
& +\frac{d}{d-1}\frac{e}{e-1}{{\alpha }^{2}}\left( 2{{L}_{f}}-\mu  \right){{L}_{f}}{{\left\| {{x}_{k}}-{{x}^{*}} \right\|}^{2}} \\
 & +4\frac{d}{d-1}\frac{e}{e-1}{{\alpha }^{2}}{{L}_{f}}{{p}_{k}}
 \end{split}
 \end{flalign}
Note that the expectations $\mathbb{E}\left[ \left. {{\left\| {{x}_{k}}-{{x}^{*}} \right\|}^{2}} \right|{{\mathsf{\mathcal{F}}}_{k}} \right]={{\left\| {{x}_{k}}-{{x}^{*}} \right\|}^{2}}$ and $\mathbb{E}\left[ \left. {{p}_{k}} \right|{{\mathsf{\mathcal{F}}}_{k}} \right]={{p}_{k}}$ due to $x_{k}$ and $p_{k}$ are determined estimate at time instant $k$.
%Since both $\lambda_{k+1}$ and $\lambda^{*}$ lie in the column space of $L$, we obtain
%${{\left\| L\left( {{\lambda }_{k+1}}-{{\lambda }^{*}} \right) \right\|}^{2}}\ge {{\rho }_{2}}\left( {{L}^{2}} \right){{\left\| {{\lambda }_{k+1}}-{{\lambda }^{*}} \right\|}^{2}}$, where ${{\rho }_{2}}\left( {{L}^{2}} \right)$ denotes the second smallest nonzero eigenvalue of $L$.

Leveraging Lemma 5 and substituting the term ${{\left\| L\left( {{\lambda }_{k+1}}-{{\lambda }^{*}} \right) \right\|}^{2}}$ of Eq. \eqref{E33} by its lower bound ${{\rho }_{2}}\left( {{L}^{2}} \right){{\left\| {{\lambda }_{k+1}}-{{\lambda }^{*}} \right\|}^{2}}$, we obtain
\begin{flalign}\label{E34}
  &{{\rho }_{2}}\left( {{L}^{2}} \right)\mathbb{E}\left[ \left. {{\left\| {{\lambda }_{k+1}}-{{\lambda }^{*}} \right\|}^{2}} \right|{{\mathsf{\mathcal{F}}}_{k}} \right] \notag\\
 \le & \mathbb{E}\left[ \left. \left\| {{x}_{k+1}}-{{x}^{*}} \right\|_{d{{\left( {{W}^{2}}-I+{{L}^{2}} \right)}^{2}}}^{2} \right|{{\mathsf{\mathcal{F}}}_{k}} \right] \notag\\
 &    +\frac{d}{d-1}e\mathbb{E}\left[ \left. \left\| {{x}_{k+1}}-{{x}_{k}} \right\|_{{{W}^{4}}}^{2} \right|{{\mathsf{\mathcal{F}}}_{k}} \right] \tag{33}\\
& +\frac{d}{d-1}\frac{e}{e-1}{{\alpha }^{2}}\left( 2{{L}_{f}}-\mu  \right){{L}_{f}}{{\left\| {{x}_{k}}-{{x}^{*}} \right\|}^{2}} \notag\\
 & +4\frac{d}{d-1}\frac{e}{e-1}{{\alpha }^{2}}{{L}_{f}}{{p}_{k}} \notag
\end{flalign}
\setcounter{equation}{33}
\noindent Combing Eqs. \eqref{E31} and \eqref{E34}, the sufficient condition for Eq. \eqref{E29} can be rewritten as
\begin{flalign}\label{E35}
&\frac{\delta }{{{\rho }_{2}}\left( {{L}^{2}} \right)}\frac{d}{d-1}e\mathbb{E}\left[ \left. \left\| {{x}_{k+1}}-{{x}_{k}} \right\|_{{{W}^{4}}}^{2} \right|{{\mathsf{\mathcal{F}}}_{k}} \right] \notag\\
  &+ \frac{\delta }{{{\rho }_{2}}\left( {{L}^{2}} \right)}\mathbb{E}\left[ \left. \left\| {{x}_{k+1}}-{{x}^{*}} \right\|_{d{{\left( {{W}^{2}}-I+{{L}^{2}} \right)}^{2}}}^{2} \right|{{\mathsf{\mathcal{F}}}_{k}} \right] \notag\\
 & +\frac{4\delta }{{{\rho }_{2}}\left( {{L}^{2}} \right)}\frac{d}{d-1}\frac{e}{e-1}{{\alpha }^{2}}{{L}_{f}}{{p}_{k}} \notag\\
 & +\frac{\delta }{{{\rho }_{2}}\left( {{L}^{2}} \right)}\frac{d}{d-1}\frac{e}{e-1}{{\alpha }^{2}}\left( 2{{L}_{f}}-\mu  \right){{L}_{f}}{{\left\| {{x}_{k}}-{{x}^{*}} \right\|}^{2}} \notag\\
 \le & \mathbb{E}\left[ \left. \left\| {{x}_{k+1}}-{{x}_{k}} \right\|_{{{W}^{2}}-\alpha \left( \eta +\frac{L_{f}^{2}}{\phi } \right)I}^{2} \right|{{\mathsf{\mathcal{F}}}_{k}} \right] \notag\\
&+\mathbb{E}\left[ \left. \left\| {{x}_{k+1}}-{{x}^{*}} \right\|_{-2\left( {{W}^{2}}-I+{{L}^{2}} \right)+{{L}^{2}}+\alpha \left( 2\mu -\phi  \right)I-\gamma Q}^{2} \right|{{\mathsf{\mathcal{F}}}_{k}} \right] \notag\\
 &+\mathbb{E}\left[ \left. \left\| {{x}_{k+1}}-{{x}^{*}} \right\|_{-\delta \left( {{W}^{2}}+\gamma Q \right)}^{2} \right|{{\mathsf{\mathcal{F}}}_{k}} \right] \tag{34}\\
 & +\left\| {{x}_{k}}-{{x}^{*}} \right\|_{\gamma Q-\left( \frac{\alpha }{\eta }\left( 2{{L}_{f}}-\mu  \right){{L}_{f}}+\frac{c{{L}_{f}}}{2{{q}_{\min }}} \right)I-\frac{\delta c{{L}_{f}}}{2{{q}_{\min }}}I}^{2} \notag\\
 & +\left( \frac{c}{{{q}_{\max }}}-\frac{4\alpha }{\eta }{{L}_{f}}-\delta c( 1-\frac{1}{{{q}_{\max }}} ) \right){{p}_{k}}\notag
\end{flalign}
\setcounter{equation}{34}
which is equivalent to prove
\begin{flalign}
  \begin{split}
  0\le & {{W}^{2}}-\alpha ( \eta +\frac{L_{f}^{2}}{\phi } )I-\frac{\delta }{{{\rho }_{2}}\left( {{L}^{2}} \right)}\frac{d}{d-1}e{{W}^{4}}\label{E36}
  \end{split}\\
 \begin{split}\label{E37}
 0\le & -\left[ 2\left( {{W}^{2}}-I+{{L}^{2}} \right)-{{L}^{2}}-\alpha \left( 2\mu -\phi  \right)I+\gamma Q \right]\\
 &-\delta {{W}^{2}}-\delta \gamma Q-\frac{\delta }{{{\rho }_{2}}\left( {{L}^{2}} \right)}d{{\left( {{W}^{2}}-I+{{L}^{2}} \right)}^{2}}
 \end{split}\\
 \begin{split}\label{E38}
 0\le & \gamma Q-\left( \frac{\alpha}{\eta }\left( 2{{L}_{f}}-\mu  \right){{L}_{f}}+\frac{c{{L}_{f}}}{2{{q}_{\min }}} \right)I-\frac{\delta c L_{f}}{{2{q}_{\min }}}I\\
 &-\frac{\delta }{{{\rho }_{2}}\left( {{L}^{2}} \right)}\frac{d}{d-1}\frac{e}{e-1}{{\alpha }^{2}}\left( 2{{L}_{f}}-\mu  \right){{L}_{f}}I
 \end{split}\\
 \begin{split}\label{E39}
 0\le & \frac{c}{{{q}_{\max }}}-\frac{4\alpha }{\eta }{{L}_{f}}-\frac{4\delta }{{{\rho }_{2}}\left( {{L}^{2}} \right)}\frac{d}{d-1}\frac{e}{e-1}{{\alpha }^{2}}{{L}_{f}}\\
 &-\delta c( 1-\frac{1}{{{q}_{\max }}} )
 \end{split}
 \end{flalign}
It can be verified that \eqref{E36} is tenable if $\alpha$ and $\delta$ satisfy
 \begin{flalign}
 0<&\alpha<\frac{{{\left[ {{\rho }_{\min }}\left( W \right) \right]}^{2}}}{\eta +\frac{L_{f}^{2}}{\phi }}\\
 0<&\delta < \frac{{{\left[ {{\rho }_{\min }}\left( W \right) \right]}^{2}}-\alpha \left( \eta +\frac{L_{f}^{2}}{\phi } \right)}{\frac{1}{{{\rho }_{2}}\left( {{L}^{2}} \right)}\frac{d}{d-1}e}
 \end{flalign}
Recalling that $Q=\left( I+3W \right)\left( I-W \right)+\alpha \left( 2\mu -\phi  \right)I$. Note that ${{\rho }_{\min }}\left( Q \right)=\alpha \left( 2\mu -\phi  \right)$, ${{\rho }_{\max }}\left( Q \right)={{\max }_{i}}\left\{ \left( 1+3{{\rho }_{i}}\left( W \right) \right)\left( 1-{{\rho }_{i}}\left( W \right) \right) \right\}+\alpha \left( 2\mu -\phi  \right)$ and ${{\rho }_{\max }}\left( W\left( W-I \right) \right)={{\max }_{i}}\left\{ {{\rho }_{i}}\left( W \right)\left( {{\rho }_{i}}\left( W \right)-1 \right) \right\}$.
Then, condition \eqref{E37} can be satisfied if there exists a positive constant $\delta$ such that
\begin{flalign}\label{44}
\delta < \frac{\left( 1-\gamma  \right)\alpha \left( 2\mu -\phi  \right)}{1+\gamma {{\rho }_{\max }}\left( Q \right)+\frac{4}{{{\rho }_{2}}\left( {{L}^{2}} \right)}d{{\left( {{\max }_{i}}\left\{ {{\rho }_{i}}\left( W \right)\left( {{\rho }_{i}}\left( W \right)-1 \right) \right\} \right)}^{2}}}
\end{flalign}
Rearranging the terms in \eqref{E38}, we obtain
\begin{flalign}\label{44}
\begin{split}
  & \gamma \alpha \left( 2\mu -\phi  \right)-\frac{\alpha \left( 2{{L}_{f}}-\mu  \right){{L}_{f}}}{\eta }-\frac{c{{L}_{f}}}{2{{q}_{\min }}} \\
\ge &\frac{\delta c}{{{q}_{\min }}}\frac{{{L}_{f}}}{2}+\frac{\delta }{{{\rho }_{2}}\left( {{L}^{2}} \right)}\frac{d}{d-1}\frac{e}{e-1}{{\alpha }^{2}}\left( 2{{L}_{f}}-\mu  \right){{L}_{f}}
 \end{split}
\end{flalign}
where $0<\phi<2\mu$.
It is clear that we can choose a small enough nonnegative constant $\delta$ such that \eqref{44} holds if its left hand side is positive.
To this end, we choose the parameters $\eta$ and $c$ satisfying
\begin{flalign}
\eta > & \frac{\left( 2{{L}_{f}}-\mu  \right){{L}_{f}}}{\gamma \left( 2\mu -\phi  \right)}\\
0 <&c < 2{{q}_{\min }}\frac{\gamma \alpha \left( 2\mu -\phi  \right)-\frac{\alpha \left( 2{{L}_{f}}-\mu  \right){{L}_{f}}}{\eta }}{{{L}_{f}}}
\end{flalign}
Then, we have
\begin{flalign}
\delta < \frac{\gamma \alpha \left( 2\mu -\phi  \right)-\frac{\alpha \left( 2{{L}_{f}}-\mu  \right){{L}_{f}}}{\eta }-\frac{c{{L}_{f}}}{2{{q}_{\min }}}}{\frac{c}{{{q}_{\min }}}\frac{{{L}_{f}}}{2}+\frac{1}{{{\rho }_{2}}\left( {{L}^{2}} \right)}\frac{d}{d-1}\frac{e}{e-1}{{\alpha }^{2}}\left( 2{{L}_{f}}-\mu  \right){{L}_{f}}}
\end{flalign}
Similarly, we have a sufficient condition for \eqref{E39}
\begin{flalign}
c>&\frac{4\alpha }{\eta }{{q}_{\max }}{{L}_{f}}\\
\eta > & \frac{ 2\frac{{{L}_{f}}}{{{q}_{\min }}}{{q}_{\max }}{{L}_{f}}+\left( 2{{L}_{f}}-\mu  \right){{L}_{f}} }{\gamma \left( 2\mu -\phi  \right)}\\
0<&\delta <\frac{\gamma \alpha (2\mu -\phi )-\frac{\alpha }{\eta }(2{{L}_{f}}-\mu ){{L}_{f}}-\frac{c{{L}_{f}}}{2{{q}_{\min }}}}{\frac{c{{L}_{f}}}{2{{q}_{\min }}}+\frac{1}{{{\rho }_{2}}({{L}^{2}})}\frac{d}{d-1}\frac{e}{e-1}{{\alpha }^{2}}(2{{L}_{f}}-\mu ){{L}_{f}}}
\end{flalign}
Concluding above analysis, we get that the $x_{k}$ generated by Algorithm 1 converges to $x^{*}$ with linear rate $O( {{\left( 1+\delta  \right)}^{-k}} )$ if conditions \eqref{E26}, \eqref{E27}, \eqref{E28}, $0<\phi<2\mu$ and $0< \gamma < 1$ are satisfied.
The proof is completed.
\end{proof}
\begin{remark}
From the above analysis, it is proved that $\left\| {{x}_{k}}-{{x}^{*}} \right\|^{2}\le {{\left( 1+\delta  \right)}^{-k}}\left\| {{x}_{0}}-{{x}^{*}} \right\|^{2}=\kappa {{\left( 1-q \right)}^{k}}$ where $q=\frac{\delta}{1+\delta}$ and $\kappa = \left\| {{x}_{0}}-{{x}^{*}} \right\|^{2}$.
Due to $1-q\le {{e}^{-q}}$ where $0<q<1$, we have $\kappa {{\left( 1-q \right)}^{k}}\le \kappa {{e}^{-qk}}$.
In order to get ${{\left\| {{x}_{k}}-{{x}^{*}} \right\|}^{2}}\le \varepsilon $, S-DIGing algorithm needs iterations number $k\ge \frac{1}{q}\log \frac{\kappa }{\varepsilon }=\left( 1+\frac{1}{\delta } \right)\log \frac{\kappa }{\varepsilon }>\left( 1+\frac{1}{\Theta } \right)\left( \log \kappa +\log \frac{1}{\varepsilon } \right)$ where $\Theta$ is defined in Theorem 1.
From the analysis of DIGing algorithm in \cite{Nedic2017c}, it can be concluded that the iteration number $k\ge \left( 1\text{+}\frac{1}{\Xi } \right)\left( \log\kappa +\log \frac{1}{\varepsilon } \right)$ where $\Xi =\frac{\alpha \mu }{1.5-\alpha \mu }$ is needed for $\left\| {{x}_{k}}-{{x}^{*}} \right\|^{2}\le \varepsilon$.
Because there are too many parameters in the convergence rate, it is difficult to compare the complexity of the two algorithms directly.
Therefore, the simulation part will show the number of iterations and time required for the two algorithms to achieve the same residual error to compare the complexity of the two algorithms.
\end{remark}

\section{Numerical examples}
In this section, we provide some numerical examples about logistic regression, energy-based source localization, and $K$-means clustering to show the effectiveness of the S-DIGing algorithm.
We use CVX \cite{Grant2017} to work out the global optimal solution ${{\tilde{x}}^{*}}$ by solving the problem in a centralized way,
In the following simulations, the residual is defined as ${{\log }_{10}}\left( \left( 1/m \right)\sum\nolimits_{i=1}^{m}{\left\| x_{k}^{i}-{{{\tilde{x}}}^{*}} \right\|} \right)$.
\subsection{Distributed Logistic Regression}
In this subsection, we leverage the S-DIGing algorithm to solve a binary classification problem by logistic regression and study the performance of the algorithm under different settings.
We assign $N=\sum\nolimits_{i=1}^{m}{{{q}_{i}}}$ samples to $m$ agents, and each one gets $q_{i}$ samples.
We assume that the samples are distributed equally over the agents, i.e., $q_{i}=N/m$, $\forall i \in \mathsf{\mathcal{V}}$.
Then we employ $m$ agents of an undirected network to cooperatively solve the following distributed logistic regression problem:
\begin{flalign}\label{E50}
{{\tilde{x}}^{*}}=\arg \underset{\tilde{x}\in {{\mathbb{R}}^{n}}}{\mathop{\min }}\left(\frac{\lambda }{2}{{\left\| {\tilde{x}} \right\|}^{2}}+\sum\limits_{i=1}^{m}{\sum\limits_{h=1}^{{{q}_{i}}}{\log \left( 1+\exp \left( -{{l}_{i,h}}c_{i,h}^{\text{T}}\tilde{x} \right) \right)}}\right)
\end{flalign}
where $l_{i,h}\in \{-1,+1\}$ and $c_{i,h}$ are label and training data of $h$-th sample kept by agent $i$, respectively.
The regularization term $\left( \lambda /2m \right){{\left\| {\tilde{x}} \right\|}^{2}}$ is added to avoid over-fitting.
Based on previous analysis, the problem in \eqref{E50} can be written in the form of (1) by defining the local objective functions $f_{i}$ as:
\begin{flalign}\label{E51}
{{f}_{i}}\left( {\tilde{x}} \right)=\frac{\lambda }{2m}{{\left\| {\tilde{x}} \right\|}^{2}}+\sum\limits_{i=1}^{{{q}_{i}}}{\log \left( 1+\exp \left( -{{l}_{i,h}}c_{i,h}^{\text{T}}\tilde{x} \right) \right)}
\end{flalign}
where
\begin{flalign}\label{E52}
f_{i}^{h}\left( {\tilde{x}} \right)=\frac{\lambda }{2m}{{\left\| {\tilde{x}} \right\|}^{2}}+{{q}_{i}}\log \left( 1+\exp \left( -{{l}_{i,h}}c_{i,h}^{\text{T}}\tilde{x} \right) \right)
\end{flalign}
for $h = 1,2,\dots,q_{i}$.
Consider the definitions of \eqref{E51} and \eqref{E52}, problem \eqref{E50} can be addressed by S-DIGing algorithm.
\subsubsection{Comparison}
In this case, we solve the logistic regression problem in \eqref{E50} for the mushroom data set provided in UCI Machine Learning Repository\cite{Dua2019}.
A subset of $8000$ samples from the data set are randomly chosen, where $N=6000$ samples are used to train the discriminator ${\tilde{x}}$ and 2000 samples are used for testing. %修改过
Each samples have 22 attributes included cap-shape, cap-surface, cap-color, bruises and so on, %odor, gill-attachment, gill-spacing, gill-size, gill-color, stalk-shape, stalk-root, stalk-surface-above-ring, stalk-surface-below-ring, stalk-color-above-ring, stalk-color-below-ring, veil-type, veil-color, ring-number, ring-type, spore-print-color, population and habitat,
but the original $12$-th attribute (stalk-surface-above-ring) has missing values and is not used.
Employing the one-hot coding method, the dimension $n$ of each sample is extended to 112.
%The purpose of comparison with the DSA algorithm is to present the performance difference between the S-DIGing algorithm and other similar stochastic gradient algorithms.
%The purpose of comparison is to compare the convergence rate of the S-DIGing algorithm and the DIGing algorithm which does not utilize stochastic gradient.
We choose $m=20$, $q_{i}=300$ for $i=1,2,\dots,m$, and the step-size $\alpha=0.001$.
We let the label $l_{i,h}=+1$ if the sample $c_{i,h}$ is poisonous and the label $l_{i,h}=-1$ if the sample $c_{i,h}$ is eatable.
We compare the performance of the S-DIGing algorithm and the DIGing algorithm.
Fig. 1 shows the evolutions of residuals respect to different algorithms while Fig. 2 shows the testing accuracy.
\begin{figure}
\centering
\subfigure[Evolution of residuals with number of iterations]{
\includegraphics[width=8cm,height=5.5cm]{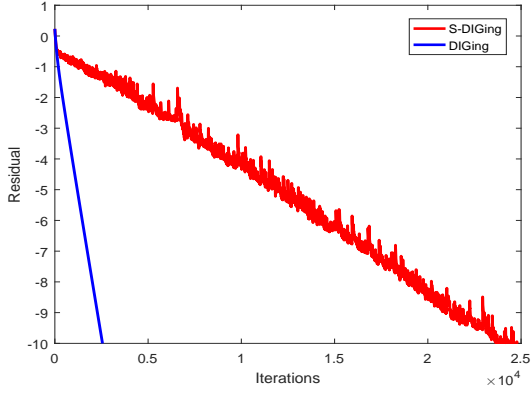}}
\subfigure[Evolution of residuals with running time of algorithms]{
\includegraphics[width=8cm,height=5.5cm]{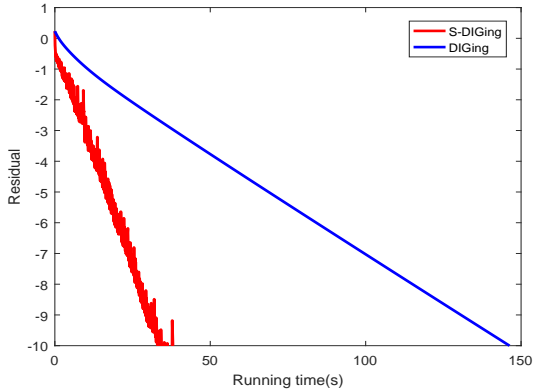}}
\caption{Comparison across S-DIGing and DIGing.}
\end{figure}
From Fig. 1, we find that the S-DIGing algorithm needs more iterations than the DIGing algorithm to achieve a same residual.
However, it is worth to note that the S-DIGing algorithm has an advantage in running time on account of the smaller computational cost required for a single iteration.
\subsubsection{Effects of Fixed Step-Size and Scales of Network}
%In this case, we mainly compare the performance of the S-DIGing algorithm with different step-sizes.
Firstly, we compare the performance of the S-DIGing algorithm in terms of step-size selection.
We choose $m=100$, $q_{i}=60$ for $i=1,2,\dots,m$, and $n=4$.
For each agent $i$, half on the feature vectors $c_{i,h}\in{{\mathbb{R}}^{4}}$ with label $l_{i,h} = +1$ are drawn by i.i.d $\mathsf{\mathcal{N}}( {{\left[ 2,2,-2,-2 \right]}^{\text{T}}},2I )$ while the others with label $l_{i,h}=-1$ are set to be i.i.d $\mathsf{\mathcal{N}}( {{\left[-2,-2,2,2 \right]}^{\text{T}}},2I )$.
Letting the step-size, $\alpha$, equal to 0.002, 0.006, 0.010, 0.014, 0.018 and 0.022, respectively, Fig. 3 shows the evolutions of residuals respect to diverse step-sizes.
The simulation is performed on the network shown in Fig. 4.
From Fig. 3, we can find that the increasing of step-size plays a positive role in the execution of the S-DIGing algorithm within a certain range.
If the step-size out of the range, the convergence of the S-DIGing algorithm will be deteriorated.
Secondly, we choose $m=50,75,100$ and select $\alpha=0.001$ and $q_{i}=6000/m$ to observe the performance of the algorithm under different scales of networks.
Fig. 5 displays the  evolutions of residuals respect to different scales of network.
\begin{figure}[!htbp]
	\centering\resizebox{8cm}{5.5cm} {\includegraphics*{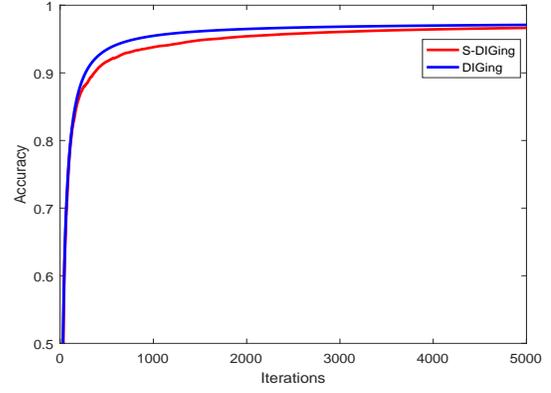}}
	\caption{Testing accuracy.}
\end{figure}
\begin{figure}[!htbp]
	\centering\resizebox{8cm}{5.5cm} {\includegraphics*{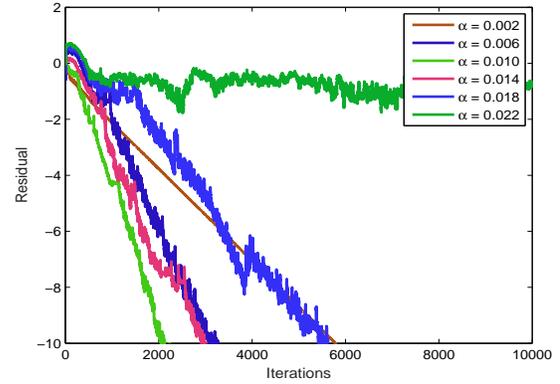}}
	\caption{Comparison across different step-sizes.}
\end{figure}
\begin{figure}[!htbp]
	\centering\resizebox{8cm}{5.5cm} {\includegraphics*{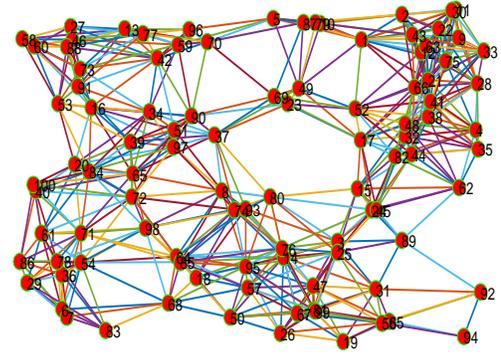}}
	\caption{Network topology.}
\end{figure}
\begin{figure}[!htbp]
	\centering\resizebox{8cm}{5.5cm} {\includegraphics*{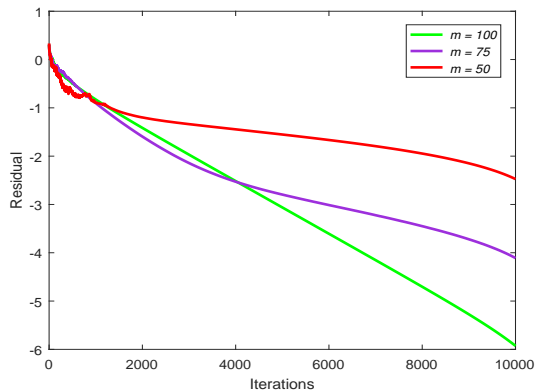}}
	\caption{Comparison across different scales of network.}
\end{figure}
\subsubsection{Image Recognition}
In this case, we solve the logistic regression problem in \eqref{E50} for the MNIST database of handwritten digits provided in \cite{LeCun2010}.
We randomly choose a subset of $58000$ handwritten digits from the MNIST database, where $N=50000$ samples from training set are used to train the discriminator $x$ and 8000 samples from testing set are used for testing.
A part of training samples are shown in Fig. 6.
The network used to solve this problem consists of $m=10$ agents and the probability of connection between each pair of agents is $40\%$.
Each image, $c_{i,h}\in {{\mathbb{R}}^{784}}$, is a vector and the total images are divided among $m$ agents such that each agent has $q_{i}$ = 5000, $i=1,2,\dots,m$, images.
Due to privacy and communication restrictions, agents do not share their local training data with others.
After the algorithm performs $1\times {{10}^{5}}$ iterations, the accuracy for each digit is shown in Table I.
\begin{figure}[!htbp]
	\centering\resizebox{6cm}{6cm} {\includegraphics*{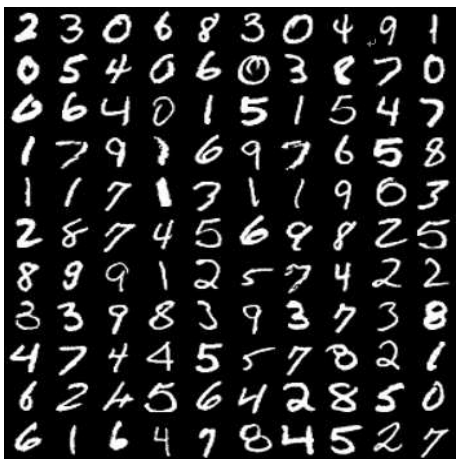}}
	\caption{Samples from the data set.}
\end{figure}

\begin{table}
\centering
\caption{Testing accuracy for classification.}
\begin{tabular}{lllllll}
\toprule
  Digit& 0 & 1 & 2 &3&4&\\
\midrule
  Accuracy &98.24\%  & 98.99\% & 96.91\% & 94.28\% & 97.16\%&\\
  \toprule
  Digit& 5 & 6 & 7 & 8&9&\\
  \midrule
 Accuracy  &95.47\%  & 97.18\% & 97.38\% & 92.87\% & 93.14\%&\\
\bottomrule
\end{tabular}
\end{table}
\subsection{Energy-Based Source Localization}
Consider a sensor network composed of $m$ agents distributed at known spatial locations, denoted ${{r}_{i}}\in {{\mathbb{R}}^{2}}$, $i=1,\ldots ,m$.
A stationary acoustic source is located at an unknown location ${{\tilde{x}}^{*}}\in {{\mathbb{R}}^{2}}$.
Let $a>0$ be a constant and ${{\upsilon }_{i,h}}$ be i.i.d. samples of a zero-mean Gaussian noise process with variance ${{\sigma }^{2}}$
We use an isotropic energy propagation model for the $h$-th received signal strength measurement at agent $i$:
${{c}_{i,h}}=\frac{a}{{{\left\| {\tilde{x}}-{{r}_{i}} \right\|}^{\theta }}}+{{v}_{i,h}}$ where $\left\| {\tilde{x}}-{{r}_{i}} \right\|>1$ and $\theta \ge 1 $ is an attenuation characteristic.
The maximum-likelihood estimator for the source's location is found by solving the problem
\begin{flalign}
{{\tilde{x}}^{*}}=\arg \underset{\tilde{x}\in {{\mathbb{R}}^{2}}}{\mathop{\min }}\,\sum\limits_{i=1}^{m}{\frac{1}{{{q}_{i}}}\sum\limits_{h=1}^{{{q}_{i}}}{{{\left( {{c}_{i,h}}-\frac{a}{{{\left\| \tilde{x}-{{r}_{i}} \right\|}^{\theta }}} \right)}^{2}}}}
\end{flalign}
The method that we use to solve this problem is proposed in \cite{Doron2006}.
According to the analysis given in \cite{Doron2006}, it can be found that the instantaneous function
\begin{flalign*}
f_{i}^{h}={{\left( {{c}_{i,h}}-\frac{a}{{{\left\| \tilde{x}-{{r}_{i}} \right\|}^{\theta }}} \right)}^{2}}
\end{flalign*}
obtains its minimum on the circle
\begin{flalign*}
{{C}_{i,h}}=\left\{ \tilde{x}\in {{\mathbb{R}}^{2}}:\left\| \tilde{x}-{{r}_{i}} \right\|=\sqrt{a/{{c}_{i,h}}} \right\}
\end{flalign*}
Let $D_{i,h}$ be the disk defined by
\begin{flalign*}
{{D}_{i,h}}=\left\{ \tilde{x}\in {{\mathbb{R}}^{2}}:\left\| \tilde{x}-{{r}_{i}} \right\|\le \sqrt{a/{{c}_{i,h}}} \right\}
\end{flalign*}
Then, the estimator is any point that minimizes the sum of squared distances to the sets ${{D}_{i,h}}$, $i=1,\ldots ,m$, $h=1,\ldots ,{{q}_{i}}$.
We rewrite the problem (52) as follows:
\begin{flalign}
{{{\tilde{x}}}^{*}}=\arg \underset{\tilde{x}\in {{\mathbb{R}}^{n}}}{\mathop{\min }}\,\sum\limits_{i=1}^{m}{\frac{1}{{{q}_{i}}}\sum\limits_{h=1}^{{{q}_{i}}}{{{\left\| \tilde{x}-{{\mathsf{\mathcal{P}}}_{{{D}_{i,h}}}}\left( {\tilde{x}} \right) \right\|}^{2}}}}
\end{flalign}
where ${{\mathsf{\mathcal{P}}}_{{{D}_{i,h}}}}\left( {\tilde{x}} \right)$ is the orthogonal projection of ${\tilde{x}}$ onto ${{D}_{i,h}}$.
\begin{figure}[!htbp]
	\centering\resizebox{8cm}{5.5cm} {\includegraphics*{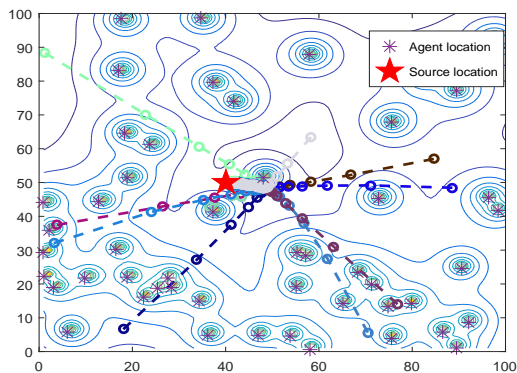}}
	\caption{A part of paths displayed on top of contours of log-likelihood function.}
\end{figure}
We have simulated this scenario with $50$ sensors uniformly distributed in a $100\times 100$ square, and the source location chosen randomly.
The source emits a signal with strength $a = 100$ and each sensor makes $100$ measurements.
Fig. 7 depicts $10$ paths taken by the S-DIGing algorithm plotted on top of contours of the log likelihood.
\subsection{Distributed K-Means Clustering}
%collaboration of its neighboring sensors.
A popular clustering method that minimizes the clustering error is the K-means algorithm \cite{likas2003global}.
Suppose that there is a data set $P=\sum\nolimits_{i=1}^{m}{\sum\nolimits_{h=1}^{{{q}_{i}}}{{{p}_{i,h}}}}$, where ${{p}_{i,h}}\in {{\mathbb{R}}^{n}}$.
The $K$-clustering aims at partitioning this data set into $K$ disjoint clusters ${{C}_{1}},\ldots ,{{C}_{K}}$, such that a
clustering criterion is optimized.
The most widely used clustering criterion is the sum of the squared Euclidean distances between each data point $p_{i,h}$ and the cluster center $m_{l}$ of the subset which contains $p_{i,h}$.
This criterion is called clustering error and depends on the cluster centers ${{m}_{1}},\ldots ,{{m}_{K}}$:
\begin{flalign}
\underset{\tilde{x}\in {{\mathbb{R}}^{Kn}}}{\mathop{\min }}\,f\left( {\tilde{x}} \right)=\sum\limits_{i=1}^{m}{\sum\limits_{h=1}^{{{q}_{i}}}{\sum\limits_{l=1}^{K}{a_{i,h}^{l}{{\left\| {{p}_{i,h}}-{{m}_{l}} \right\|}^{2}}}}}
\end{flalign}
where $\tilde{x}={{[m_{1}^{\text{T}},.\ldots ,m_{K}^{\text{T}}]}^{\text{T}}}$, $a_{i,h}^{l}=1$ if ${{p}_{i,h}}\in {{C}_{l}}$ and $a_{i,h}^{l}=0$ otherwise.
Then, we solve the clustering problem for the Iris data set provided in UCI Machine Learning Repository \cite{Dua2019}.
The data set contains 3 classes of 50 samples, where each class refers to a type of iris plant.
Each sample has 4 attributes included sepal length, sepal width, petal length and petal width.
We set $m=5$, ${{q}_{i}}=30$, $i=1,\ldots ,m$, and the probability of connection between each pair of agents is $40\%$.
Fig. 8 presents that although the S-DIGing algorithm needs more iterations than the DIGing algorithm to achieve a same residual.
The S-DIGing algorithm has an advantage in running time due to the smaller computational cost required for a single iteration.
%Experimental results show that the accuracy of clustering can reach $96\%$.
%\begin{figure}[!htbp]
%	\centering\resizebox{8cm}{5.5cm} {\includegraphics*{iriscomparewithDIG.eps}}
%	\caption{Comparison across S-DIGing and DIGing.}
%\end{figure}
\begin{figure}
\centering
\subfigure[Evolution of residuals with number of iterations]{
\includegraphics[width=8cm,height=5.5cm]{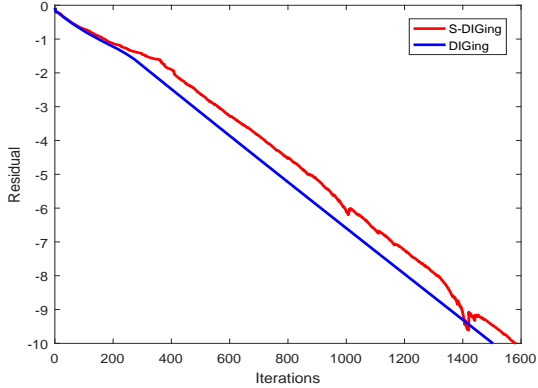}}
\subfigure[Evolution of residuals with running time of algorithms]{
\includegraphics[width=8cm,height=5.5cm]{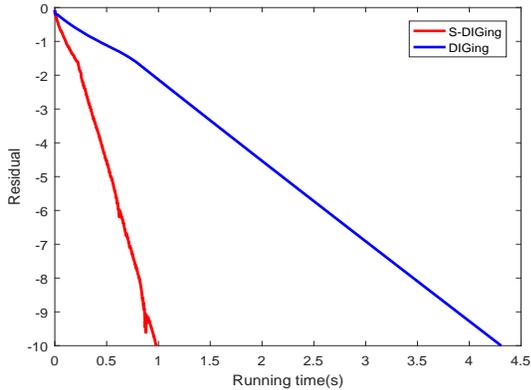}}
\caption{Comparison across S-DIGing and DIGing.}
\end{figure}
\section{Conclusion}
In this paper, a distributed stochastic gradient tracking algorithm which combines the gradient tracking algorithm with stochastic averaging gradient was proposed to solve the distributed optimization problem where each local objective function is constructed as an average of instantaneous functions.
Employing the unbiased stochastic gradient technology, the cost of calculating the gradient of local objective function at each agent is greatly reduced.
The theoretical analysis showed that the S-DIGing algorithm can linearly converge to the global optimal solution with explicit convergence rate when step-size is positive and less than an upper bound.
We presented three numerical simulations to illustrate the effectiveness of the S-DIGing algorithm.
Future work will focus on further improving the convergence rate and studying distributed optimization algorithm over time-varying and directed networks.
For good measure, different from the synchronous update and communication required by the existing algorithm, asynchronous distributed optimization algorithm is also a promising research.

\bibliographystyle{ieeetr}
\bibliography{library}

\end{document}